\newif\ifORL
\theoremstyle{definition}
\newtheorem{thm}{Theorem}
\newtheorem{prop}[thm]{Proposition}
\newtheorem{lemma}[thm]{Lemma}
\newtheorem{claim}[thm]{Claim}
\newtheorem{defn}[thm]{Definition}
\newcommand{\R}{\mathbb{R}}
\newcommand{\E}{\mathbb{E}}
\DeclareMathOperator{\Var}{Var}
\renewcommand{\P}{\mathcal{P}}
\newcounter{mynotes}
	\journal{Operations Research Letters}
\begin{document}

\ifORL
	\begin{frontmatter}
\fi
%% Title, authors and addresses

%% use the tnoteref command within \title for footnotes;
%% use the tnotetext command for theassociated footnote;
%% use the fnref command within \author or \address for footnotes;
%% use the fntext command for theassociated footnote;
%% use the corref command within \author for corresponding author footnotes;
%% use the cortext command for theassociated footnote;
%% use the ead command for the email address,
%% and the form \ead[url] for the home page:
%% \title{Title\tnoteref{label1}}
%% \tnotetext[label1]{}
%% \author{Name\corref{cor1}\fnref{label2}}
%% \ead{email address}
%% \ead[url]{home page}
%% \fntext[label2]{}
%% \cortext[cor1]{}
%% \address{Address\fnref{label3}}
%% \fntext[label3]{}

\title{Some lower bounds on sparse outer approximations of polytopes}

%% use optional labels to link authors explicitly to addresses:
%% \author[label1,label2]{}
%% \address[label1]{}
%% \address[label2]{}

\ifORL
	\author[gt]{Santanu S. Dey}
	\author[gt]{Andres Iroume}
	\author[delft]{Marco Molinaro}

	\address[gt]{ISyE, Georgia Tech}
	\address[delft]{EWI, TU Delft}
\else
	\author{Santanu S. Dey \and Andres Iroume \and Marco Molinaro}

	\maketitle
\fi
%%%%%%%%%%%%%%%%%%%%%%%%%%%%%%%%%%%%%%%%%%%%%%%%%%%%%%%%%%%%%%%%%%%%%%%%%%%%%%%%%%%%%%%%%%%%%%%%%%%%%%%%%%%%%%%%%%%%%%%%%%%%%%%%%%%%%%%%%
%%%%%%%%%%%%%%%%%%%%%%%%%%%%%%%%%%%%%%%%%%%%%%%%%%%%%%%%%%%%%%%%%%%%%%%%%%%%%%%%%%%%%%%%%%%%%%%%%%%%%%%%%%%%%%%%%%%%%%%%%%%%%%%%%%%%%%%%%

\begin{abstract}

Motivated by the need to better understand the properties of sparse cutting-planes used in mixed integer programming solvers, the paper~\cite{deygood} studied the idealized problem of how well a polytope is approximated by the use of sparse valid inequalities. As an extension to this work, we study the following ``less idealized" questions in this paper: (1) Are there integer programs, such that sparse inequalities do not approximate the integer hull well even when added to a linear programming relaxation?  (2) Are there polytopes, where the quality of approximation by sparse inequalities cannot be significantly improved by adding a budgeted number of arbitrary (possibly dense) valid inequalities? (3) Are there polytopes that are difficult to approximate under every rotation? (4) Are there polytopes that are difficult to approximate in all directions using sparse inequalities? We answer each of the above questions in the positive.
\end{abstract}

\ifORL
	\begin{keyword}
Sparse inequalities \sep approximations of polytopes
%% keywords here, in the form: keyword \sep keyword

%% PACS codes here, in the form: \PACS code \sep code

%% MSC codes here, in the form: \MSC code \sep code
%% or \MSC[2008] code \sep code (2000 is the default)

	\end{keyword}

\end{frontmatter}
\fi

%% \linenumbers

%% main text

%%%%%%%%%%%%%%%%%%%%%%%%%%%%%%%%%%%%%%%%%%%%%%%%%%%%%%%%%%%%%%%%%%%%%%%%%%%%%%%%%%%%%%%%%%%%%%%%%%%%%%%%%%%%%%%%%%%%%%%%%%%%%%%%%%%%%%%%%
%%%%%%%%%%%%%%%%%%%%%%%%%%%%%%%%%%%%%%%%%%%%%%%%%%%%%%%%%%%%%%%%%%%%%%%%%%%%%%%%%%%%%%%%%%%%%%%%%%%%%%%%%%%%%%%%%%%%%%%%%%%%%%%%%%%%%%%%%

\section{Introduction}

The paper~\cite{deygood} studied how well one can expect to approximate polytopes  using valid inequalities that are sparse. The motivation for this study came from the usage of cutting-planes in integer programming (IP) solvers. In principle, facet-defining inequalities of the integer hull of a polytope can be dense, i.e. they can have non-zero coefficients for a high number of variables. In practice, however, most state-of-the-art IP solvers bias their cutting-plane selection towards the use of sparse inequalities. This is done, in part, to take advantage of the fact that linear programming solvers can harness sparsity well to obtain significant speedups.

The paper~\cite{deygood} shows that for polytopes with a polynomial number of vertices, sparse inequalities produce very good approximations of polytopes. However, when the number of vertices increase, the sparse inequalities do not provide a good approximation in general; in fact with high probability the quality of approximation is poor for random 0-1 polytopes with super polynomial number of vertices (see details in~\cite{deygood}).

However the study in~\cite{deygood} is very ``idealized" in the context of cutting-planes for IPs, since almost always some dense cutting-planes are used or one is interested in approximating the integer only only along certain directions.  In this paper, we consider some natural extensions to understand the properties of sparse inequalities under more ``realistic conditions":

\begin{enumerate}
\item All the results in the paper~\cite{deygood} deal with the case when we are attempting to approximate the integer hull using only sparse inequalities. However, in practice the LP relaxation may have dense inequalities.  Therefore we examine the following question: Are there integer programs, such that sparse inequalities do not approximate the integer hull well when added to a linear programming relaxation?

\item More generally, we may consider attempting to improve the approximation of a polytope by adding a few dense inequalities together with sparse inequalities. Therefore we examine the following question: Are there polytopes, where the quality of approximation by sparse inequalities cannot be significantly improved by adding polynomial (or even exponential) number of \emph{arbitrary} valid inequalities?

\item It is clear that the approximations of polytopes using sparse inequalities is not invariant under affine transformations (in particular rotations). This leaves open the possibility that a clever reformulation of the polytope of interest may vastly improve the approximation obtained by sparse cuts. Therefore a basic question in this direction: Are there polytopes that are difficult to approximate under \emph{every} rotation?

\item In optimization one is usually concerned with the feasible region in the direction of the objective function. Therefore we examine the following question: Are there polytopes that are difficult to approximate in \emph{almost all} directions using sparse inequalities?
\end{enumerate}

We are able to present examples that answer each of the above questions in the positive. This is perhaps not surprising: an indication that sparse inequalities do not always approximate integer hulls well even in the more realistic settings considered in this paper. Understanding when sparse inequalities are effective in all the above settings is an important research direction.

The rest of the paper is organized as follows. Section~\ref{sec:preliminary} collects all required preliminary definitions. In Section~\ref{sec:main} we formally present all the results. In Sections~\ref{lprelax}-\ref{sec:alldirection} we present proofs of the various results.

%%%%%%%%%%%%%%%%%%%%%%%%%%%%%%%%%%%%%%%%%%%%%%%%%%%%%%%%%%%%%%%%%%%%%%%%%%%%%%%%%%%%%%%%%%%%%%%%%%%%%%%%%%%%%%%%%%%%%%%%%%%%%%%%%%%%%%%%%
%%%%%%%%%%%%%%%%%%%%%%%%%%%%%%%%%%%%%%%%%%%%%%%%%%%%%%%%%%%%%%%%%%%%%%%%%%%%%%%%%%%%%%%%%%%%%%%%%%%%%%%%%%%%%%%%%%%%%%%%%%%%%%%%%%%%%%%%%

\section{Preliminaries}\label{sec:preliminary}

\subsection{Definitions}
For a natural number $n$, let $\left[n\right]$ denote the set $\left\{1,\ldots,n\right\}$ and, for non-negative integer $k\le n$ let ${\left[n\right]\choose k}$ denote the set of all subsets of $\left[n\right]$ with $k$ elements. For any $x\in \mathbb{R}^n$, let $||x||_1$ denote the $l_1$ norm of $x$ and $||x||$ or $||x||_2$ denote the $l_2$ norm of $x$.

An inequality $\alpha x \leq \beta$ is called \emph{$k$-sparse} if $\alpha$ has at most $k$ non-zero components. Given a polytope $P\subset \R^n$, $P^k$ is defined as the intersection of all $k$-sparse cuts valid for $P$ (as in \cite{deygood}), that is, the best outer-approximation obtained from $k$-sparse inequalities.

Given two polytopes $P, Q \subset \mathbb{R}^n$ such that $P \subseteq Q$ we consider the Hausdorff distance $d(P,Q)$ between them: $$d(P,Q):= \textup{max}_{x \in Q}\left(\textup{min}_{y \in P}||x - y||\right).$$
When $P, Q\subset \left[-1,1\right]^n$, we have that $d(P,Q)$ is upper bounded by $2\sqrt{n}$, the largest distance between two points in $\left[-1,1\right]^n$. In this case, if $d(P, Q) \propto \sqrt{n}$ the error of  approximation of $P$ by $Q$ is basically as large as it can be and smaller $d(P,Q)$ (for example constant or of the order of $\sqrt{\log n}$) will indicate better approximations.

Given a polytope $P \subseteq \R^n$ and a vector $c \in \R^n$, we define
\begin{align*}
gap^k_P(c) &= \max_{x\in P^k} cx - \max_{x\in P} cx,
\end{align*}
	namely the ``gap'' between $P^k$ and $P$ in direction $c$. We first note that $d(P, P^k)$ equals the worst directional gap between $P^k$ and $P$ (the proof is presented in Appendix A).
	
	\begin{lemma} \label{lemma:distGap}
		For every polytope $P \subseteq \R^n$, $d(P, P^k) = \max_{c:||c||=1} gap^k_P(c)$.
	\end{lemma}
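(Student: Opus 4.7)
My plan is to prove the identity by showing both inequalities separately, using in each direction a different vector $c$.

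For the easy direction $d(P,P^k) \ge \max_{c:\|c\|=1} gap^k_P(c)$, fix any unit vector $c$ and let $x^* \in P^k$, $y^* \in P$ attain $\max_{P^k} cx$ and $\max_P cy$ respectively, so that $gap^k_P(c) = cx^* - cy^*$. Let $\bar y \in P$ be a closest point in $P$ to $x^*$, so $\|x^* - \bar y\| \le d(P,P^k)$. Since $\bar y \in P$, we have $c\bar y \le cy^*$, and by Cauchy-Schwarz $c(x^* - \bar y) \le \|c\|\,\|x^* - \bar y\|$. Combining these,
\[
gap^k_P(c) = cx^* - cy^* \;\le\; cx^* - c\bar y \;\le\; \|x^* - \bar y\| \;\le\; d(P,P^k).
\]

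For the reverse direction $d(P,P^k) \le \max_{c:\|c\|=1} gap^k_P(c)$, the plan is to take a witness $x^* \in P^k$ achieving $d(P,P^k) = \min_{y \in P}\|x^* - y\|$, let $y^*$ be its Euclidean projection onto $P$, and choose $c := (x^* - y^*)/\|x^* - y^*\|$ (the case $x^* \in P$ is trivial). Since $y^*$ is the projection of $x^*$ onto the closed convex set $P$, the standard variational characterization gives $(x^* - y^*)^\top(y - y^*) \le 0$ for all $y \in P$, i.e. $cy \le cy^*$ for all $y \in P$. Hence $\max_{y \in P} cy = cy^*$, and since $x^* \in P^k$ we get
\[
gap^k_P(c) \;\ge\; cx^* - cy^* \;=\; c(x^* - y^*) \;=\; \|x^* - y^*\| \;=\; d(P,P^k).
\]

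The only slightly delicate point is the $\ge$ direction, which requires the projection inequality onto the convex set $P$; equivalently one could invoke a supporting hyperplane of $P$ at $y^*$ perpendicular to $x^* - y^*$. The rest is a direct Cauchy-Schwarz computation, and the proof goes through as long as $d(P,P^k) > 0$; the degenerate case $P = P^k$ makes both sides zero.
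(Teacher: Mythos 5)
Your proof is correct and rests on the same central idea as the paper's: take $x^* \in P^k$ at maximum distance from $P$, let $y^*$ be its Euclidean projection onto $P$, and set $c = (x^*-y^*)/\|x^*-y^*\|$, so that the supporting hyperplane at $y^*$ gives $\max_{y\in P} cy = cy^*$ and hence $gap^k_P(c) \ge c(x^*-y^*) = d(P,P^k)$. Where you differ is in the ``easy'' direction, and there your version is actually tidier than the paper's. The paper argues by contradiction: assuming some $c'$ gives a strictly larger gap, it picks $x'\in P^k$, $y'\in P$ ``at which $gap^k_P(c')$ is attained'' and writes $d(P,P^k) \ge \|x'-y'\|$ followed by $c'(x'-y') = gap^k_P(c')$; but the first step requires $y'$ to be the point of $P$ closest to $x'$, while the second requires $y'$ to be the $c'$-maximizer over $P$, and a single $y'$ need not play both roles. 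You avoid this by cleanly separating the two roles ($\bar y$ is the closest point, $y^*$ the maximizer) and chaining $cx^* - cy^* \le cx^* - c\bar y \le \|x^*-\bar y\| \le d(P,P^k)$. You also correctly note that the paper's extra contradiction argument showing $\max_{x\in P^k} cx = cx^*$ exactly is unnecessary, since only the trivial inequality $\max_{x\in P^k} cx \ge cx^*$ is needed. So: same approach, but the execution in your proposal is both simpler and more careful.
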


For a set $\mathcal{D} = \left\{\alpha_1 x \leq \beta_1,\ldots, \alpha_d x \leq \beta_d\right\}$ of (possibly dense) valid inequalities for $P$, let $P^{k,\mathcal{D}}$ denote the outer-approximation obtained by adding all $k$-sparse cuts and the inequalities from $\mathcal{D}$:
\begin{align}
P^{k,\mathcal{D}} &= \left(   \bigcap_{i=1}^{d} \left\{  x\in \R^n : a_ix\le b_i   \right\}    \right)     \bigcap   P^k.
\label{pkc}
\end{align}
Since $P^{k,\mathcal{D}}\subseteq P^k$ we have that $d(P,P^{k,\mathcal{D}})\leq d(P,P^{k})$ for any set $\mathcal{D}$ of valid inequalities for $P$.

\subsection{Important Polytopes}

Throughout the paper, we will focus our attention on the polytopes $\mathcal{P}_{t,n} \subseteq [0, 1]^n$ defined as
\begin{align}
\mathcal{P}_{t,n} &=\left\{ x \in \left[0,1\right]^n:\sum_{i=1}^n x_i \leq t\right\}.
\label{goodset}
\end{align}
Notice that for $t = 1$ we obtain a simplex and for $t = n/2$ we obtain half of the hypercube. Moreover different values to $t$ yield very different properties regarding approximability using sparse inequalities, as discussed in~\cite{deygood}.
\begin{prop}\label{prop:pk}
	The following hold:
\begin{enumerate}
\item $d(\mathcal{P}_{1,n}, \mathcal{P}_{1,n}^k) =  \frac{\sqrt{n}}{k} - \frac{1}{\sqrt{n}}$.
\item $d(\mathcal{P}_{n/2,n}, \mathcal{P}_{n/2,n}^k) = \left\{\begin{array}{cl} \sqrt{n}/2  & \textup{if } k \leq n/2\\  \frac{n\sqrt{n}}{2k} - \frac{\sqrt{n}}{2} & \textup{if } k > n/2 \end{array} \right. $.
\end{enumerate}
\end{prop}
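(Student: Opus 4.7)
My plan is to apply Lemma~\ref{lemma:distGap} so that $d(\mathcal{P}_{t,n},\mathcal{P}_{t,n}^k)$ reduces to the worst directional gap, and in each case match an upper bound on the gap with an explicit near-extreme point. A useful preliminary is the explicit description $\mathcal{P}_{t,n}^k = \{x\in[0,1]^n : \sum_{i\in S}x_i\leq t \text{ for all }|S|=k\}$, obtained from projecting $\mathcal{P}_{t,n}$ onto $k$-coordinate subsets, together with the fact that $1$-sparse cuts already impose $x\in[0,1]^n$. Equivalently, $\mathcal{P}_{t,n}^k$ consists of the $x\in[0,1]^n$ whose $k$ largest coordinates sum to at most $t$.

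Two of the three sub-cases are immediate. When $t=n/2$ and $k\leq n/2$, every sum $\sum_{i\in S}x_i$ with $|S|=k$ is automatically at most $k\leq n/2$ on $[0,1]^n$, so $\mathcal{P}_{n/2,n}^k=[0,1]^n$; the Hausdorff distance is then $\sqrt{n}/2$, attained at $x=(1,\ldots,1)$ (its nearest point in $\mathcal{P}_{n/2,n}$ is $(1/2,\ldots,1/2)$, and any other $x\in[0,1]^n$ satisfies $\|x-(1/2,\ldots,1/2)\|\leq\sqrt{n}/2$). For the remaining cases the lower bound is furnished by $x^{\ast}:=(t/k,\ldots,t/k)$, which lies in $\mathcal{P}_{t,n}^k$ (its top-$k$-sum is exactly $t$) and whose nearest point in $\mathcal{P}_{t,n}$ is $(t/n,\ldots,t/n)$ on the hyperplane $\sum x_i=t$, at distance $\sqrt{n}(t/k - t/n)$; specializing to $t=1$ and $t=n/2$ recovers the two claimed formulas.

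For the matching upper bound I would bound $gap^k_{\mathcal{P}_{t,n}}(c)$ for every unit $c$. Without loss of generality $c\geq0$ with $c_1\geq\cdots\geq c_n$, since zeroing negative coordinates changes neither maximum ($\mathcal{P}_{t,n},\mathcal{P}_{t,n}^k\subseteq[0,1]^n$). Then $\max_{\mathcal{P}_{t,n}}cx$ equals $c_1$ for $t=1$ and $C_{n/2}:=c_1+\cdots+c_{n/2}$ for $t=n/2$. For the sparse outer approximation, permutation-symmetry of $\mathcal{P}_{t,n}^k$ and the rearrangement inequality let me assume the optimizer $x$ is sorted like $c$; substituting $d_j:=x_j-x_{j+1}\geq 0$ (with $x_{n+1}:=0$) turns the problem into the two-constraint LP
\[
\max \sum_j C_j\,d_j \quad\text{s.t.}\quad \sum_j \min(j,k)\,d_j\leq t,\ \sum_j d_j\leq 1,\ d_j\geq 0,
\]
where $C_j:=c_1+\cdots+c_j$. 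A short case analysis of this LP shows the optimum equals $\max\bigl(C_{\mathrm{vert}},\,(t/k)C_n\bigr)$, where $C_{\mathrm{vert}}\in\{c_1,C_{n/2}\}$ is the polytope max and the second term is achieved at $d_n=t/k$ (equivalently, $x=x^{\ast}$).

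Given this formula, the gap is at most $(t/k)C_n - C_{\mathrm{vert}}$, and two one-line inequalities finish the job: Cauchy--Schwarz gives $C_n\leq\sqrt{n}\,\|c\|=\sqrt{n}$, and the sortedness of $c$ gives $c_1\geq C_n/n$ (max exceeds average) and $C_{n/2}\geq C_n/2$ (top half exceeds bottom half). Substituting yields gap $\leq C_n(1/k-1/n)\leq\sqrt{n}/k-1/\sqrt{n}$ for $t=1$ and gap $\leq C_n\bigl(n/(2k)-1/2\bigr)\leq \sqrt{n}(n-k)/(2k)$ for $t=n/2$, matching the lower bounds. The step I expect to take the most care is the LP analysis establishing the formula for $\max_{\mathcal{P}_{t,n}^k}cx$: mixtures with two nonzero $d_{j_1},d_{j_2}$ satisfying $j_1,j_2\leq k$ are handled uniformly by the concavity of $j\mapsto C_j$ (as $c$ is sorted), but the case $j_1\leq k<j_2$ requires a direct monotonicity check of the candidate value $((k-n/2)C_{j_1}+(n/2-j_1)C_{j_2})/(k-j_1)$ to confirm it cannot exceed $\max(C_{n/2},(n/(2k))C_n)$.
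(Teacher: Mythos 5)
The paper does not actually prove Proposition~\ref{prop:pk}; it is imported from~\cite{deygood}. So the real question is whether your argument stands on its own, and it does not: the intermediate claim on which your upper bound hinges is false.

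You reduce the upper bound to the assertion
\[
\max_{x\in\mathcal{P}_{t,n}^k} cx \;=\; \max\!\left(C_{\mathrm{vert}},\ \tfrac{t}{k}\,C_n\right),
\]
and you explicitly defer the sub-case $j_1\leq k<j_2$ of the two-variable LP vertex, asserting that ``a direct monotonicity check'' will show the candidate value $\bigl((k-\tfrac n2)C_{j_1}+(\tfrac n2-j_1)C_{j_2}\bigr)/(k-j_1)$ cannot exceed $\max(C_{n/2},\ \tfrac{n}{2k}C_n)$. That check fails. Take $n=4$, $k=3$, $t=n/2=2$, and $c=(2,1,1,1)$ (unnormalized). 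With $j_1=1$, $j_2=4$ one gets $\tfrac{(3-2)C_1+(2-1)C_4}{3-1}=\tfrac{2+5}{2}=\tfrac{7}{2}$, while $\max(C_2,\tfrac{2}{3}C_4)=\max(3,\tfrac{10}{3})=\tfrac{10}{3}<\tfrac{7}{2}$. Equivalently, the point $x=(1,\tfrac12,\tfrac12,\tfrac12)$ lies in $\mathcal{P}_{2,4}^{3}$ and has $cx=\tfrac72$, strictly larger than both $C_{n/2}$ (achieved at $(1,1,0,0)$) and $\tfrac{t}{k}C_n$ (achieved at $(\tfrac23,\tfrac23,\tfrac23,\tfrac23)$). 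So the formula $\max(C_{\mathrm{vert}},\tfrac{t}{k}C_n)$ undercounts $\max_{P^k}cx$, and the chain of one-line inequalities you build on it (Cauchy--Schwarz plus max-vs-average) proves a bound for the wrong quantity.

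To be clear about scope: your description of $\mathcal{P}_{t,n}^k$ is right, the lower bound via $x^{\ast}=(t/k,\dots,t/k)$ is right, the WLOG reduction to $c\geq0$ sorted is right (both polytopes are down-monotone), and the sub-cases $t=1$ (any $k$) and $t=n/2$, $k\leq n/2$ are fully correct as you wrote them, since there the two-support LP vertices are degenerate or infeasible. The gap is confined to $t=n/2$, $k>n/2$, where the correct value of $\max_{P^k}cx$ includes the mixed vertices you set aside. The proposition itself is presumably still true (the mixed-vertex excess in the example above is small enough not to threaten the stated distance), but your argument as written does not establish it: you would need to bound $\bigl((k-\tfrac n2)C_{j_1}+(\tfrac n2-j_1)C_n\bigr)/(k-j_1)-C_{n/2}$ directly for all $1\leq j_1\leq n/2$ and all unit $c$, which is a genuinely different (and more delicate) estimate than the one you carried out.
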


	We will also consider symmetrized versions of the polytopes $\P_{t,n}$. To define this symmetrization, for $x\in \mathbb{R}^n$ and $I\subset \left[n\right]$ let $x^I$ denote the vector obtained by switching the sign of the components of $x$ not in $I$:
\begin{align*}
x^I_i &=\left\{
\begin{array}{rl}
x_i &  \mbox{if } i \in I\\
-x_i & \mbox{if } i \notin I.\\
\end{array}
\right.
\end{align*}
More generally, for a set $P \subseteq \R^n$ we define $P^I =\left\{x^I \in \R^n: x \in P\right\}.$

\begin{defn}
For a polytope $P\subseteq \R_{+}^n$, we define its symmetrized version $\overline{P} =\mbox{conv}\left(\bigcup_{I\subseteq\left[n\right]} P^I\right).$
\end{defn}

Note that $\overline{\mathcal{P}_{1,n}}$ is the cross polytope in dimension $n$; more generally, we have the following external description of the symmetrized versions of $\P_{t,n}$ and $\P^k_{t,n}$ (proof presented in Appendix B).
\begin{lemma}\label{tildepk}
\begin{align}
\overline{\mathcal{P}_{t,n}} &=\left\{x \in \left[-1,1\right]^n:\ \forall I \subset \left[n\right],\ \sum_{i\in I}x_i - \sum_{i\in\left[n\right] \backslash I}x_i \leq t\right\}\\
\overline{\mathcal{P}_{t,n}}^k &=\left\{x \in \left[-1,1\right]^n:\ \forall I \in {\left[n\right]\choose k},\ \forall I^+,I^- \mbox{ partition of } I, \right.  \nonumber \\
		&~~~~~~~~~~~~~~~~~~~~~~~~~~~~~~~~~~~~~~~~~\qquad \left. \sum_{i\in I^+}x_i - \sum_{i\in I^-}x_i \leq t\right\}.
\end{align}
\end{lemma}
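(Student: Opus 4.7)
The plan is to prove the two equations separately, with the first giving an external description that feeds directly into the second.

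For the first equation, I would prove the two inclusions. For $\overline{\mathcal{P}_{t,n}} \subseteq \mathrm{RHS}$, it suffices by convexity of the RHS to check that each $\mathcal{P}_{t,n}^J$ is contained in it. A point $y \in \mathcal{P}_{t,n}^J$ has the form $y_i = x_i$ for $i \in J$ and $y_i = -x_i$ for $i \notin J$, for some $x \in \mathcal{P}_{t,n}$; in particular $|y_i| = x_i \in [0,1]$ and $\sum_i |y_i| = \sum_i x_i \leq t$, so for any $I \subseteq [n]$,
\[
\sum_{i \in I} y_i - \sum_{i \notin I} y_i \;\leq\; \sum_i |y_i| \;\leq\; t.
\]
For the reverse inclusion, given $y$ in the RHS set $J := \{i : y_i \geq 0\}$ and $x_i := |y_i|$. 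Then $x \in [0,1]^n$, and the listed inequality with $I = J$ reads $\sum_i x_i = \sum_{i\in J}y_i - \sum_{i\notin J} y_i \leq t$, so $x \in \mathcal{P}_{t,n}$ and $y = x^J \in \mathcal{P}_{t,n}^J \subseteq \overline{\mathcal{P}_{t,n}}$.

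For the second equation, I would use the standard observation that for any polytope $P \subseteq \R^n$,
\[
P^k \;=\; \bigcap_{S \in \binom{[n]}{k}} \pi_S^{-1}\bigl(\pi_S(P)\bigr),
\]
where $\pi_S$ denotes the projection onto the coordinates in $S$. This holds because a $k$-sparse inequality with support contained in $S$ is valid for $P$ if and only if it is valid for $\pi_S(P)$, so the $k$-sparse closure is captured by the cylinder lifts of the coordinate projections. Applying this to $P = \overline{\mathcal{P}_{t,n}}$ reduces the task to computing $\pi_S(\overline{\mathcal{P}_{t,n}})$ for each $S \in \binom{[n]}{k}$.

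Using the first equation, I would identify
\[
\pi_S\bigl(\overline{\mathcal{P}_{t,n}}\bigr) \;=\; \bigl\{z \in [-1,1]^S : \textstyle\sum_{i \in S} |z_i| \leq t\bigr\}.
\]
The forward containment is immediate since any $y \in \overline{\mathcal{P}_{t,n}}$ satisfies $\sum_{i\in S}|y_i| \leq \sum_i |y_i| \leq t$. The reverse containment follows because any such $z$ lifts to a point in $\overline{\mathcal{P}_{t,n}}$ by setting the coordinates outside $S$ to zero (and verifying membership via the first equation in dimension $|S|$). Equivalently, by the first equation applied in dimension $k = |S|$, this projection is described by the inequalities $\sum_{i \in I^+} z_i - \sum_{i \in I^-} z_i \leq t$ for every partition $I^+, I^-$ of $S$, together with $z \in [-1,1]^S$. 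Intersecting the cylinder lifts $\pi_S^{-1}(\pi_S(\overline{\mathcal{P}_{t,n}}))$ over all $S \in \binom{[n]}{k}$ then yields exactly the RHS of the second equation (the box constraints $x_i \in [-1,1]$ are recovered since every coordinate lies in some $k$-subset).

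The main obstacle is the identification of $\pi_S(\overline{\mathcal{P}_{t,n}})$: one must justify that zeroing out the coordinates outside $S$ produces a valid element of $\overline{\mathcal{P}_{t,n}}$, which is clean but requires the external description from the first equation to be already in hand. Everything else is bookkeeping, as the projection statement and the cylinder-lift characterization of $P^k$ are both standard.
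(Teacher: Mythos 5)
Your proof is correct, but it takes a genuinely different route from the paper's. The paper (Appendix~B) derives Lemma~\ref{tildepk} as a corollary of general facts about down-monotone polytopes $P \subseteq \R^n_+$: Lemma~\ref{prelemma} ($\overline{P} = \bigcup_I P^I$), Lemma~\ref{prelemma2} (symmetrization commutes with the $k$-sparse closure, $(\overline{P})^k = \overline{P^k}$), and a proposition converting any H-description of $P$ by nonnegative inequalities into an H-description of $\overline{P}$. The first equation then follows by applying that proposition to $\P_{t,n}$, and the second by writing $(\overline{\P_{t,n}})^k = \overline{\P_{t,n}^k}$ and applying the proposition to the known description of $\P_{t,n}^k$. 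You instead prove the first equation by a direct double inclusion, and the second by the cylinder-lift characterization $P^k = \bigcap_{S \in {[n]\choose k}} \pi_S^{-1}(\pi_S(P))$, then computing $\pi_S(\overline{\P_{t,n}})$ explicitly as a scaled $\ell_1$-ball cross-section of the box. Both arguments are sound; the paper's machinery packages $(\overline{P})^k = \overline{P^k}$ as a reusable structural lemma for all down-monotone polytopes, whereas your argument is more self-contained and exploits the concrete $\ell_1$ structure of $\overline{\P_{t,n}}$ to sidestep proving that commutation. (The cylinder-lift identity you rely on does appear in the paper, but in the proof of Theorem~\ref{thm:rotation}, where $(RP)^k$ is written as $\bigcap_K Q_K$ with $Q_K = RP + 0^K \times \R^{\bar K}$, not in the proof of this lemma.)
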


%%%%%%%%%%%%%%%%%%%%%%%%%%%%%%%%%%%%%%%%%%%%%%%%%%%%%%%%%%%%%%%%%%%%%%%%%%%%%%%%%%%%%%%%%%%%%%%%%%%%%%%%%%%%%%%%%%%%%%%%%%%%%%%%%%%%%%%%%
%%%%%%%%%%%%%%%%%%%%%%%%%%%%%%%%%%%%%%%%%%%%%%%%%%%%%%%%%%%%%%%%%%%%%%%%%%%%%%%%%%%%%%%%%%%%%%%%%%%%%%%%%%%%%%%%%%%%%%%%%%%%%%%%%%%%%%%%%
\section{Main results}\label{sec:main}

%%%%%%%%%%%%%%%%%%%%%%%%%%%%%%%%%%%%%%%%%%%%%%%%%%%%%%%%%%%%%%%%%%%%%%%%%%%%%%%%%%%%%%%%%%%%%%%%%%%%%%%%%%%%%%%%%%%%%%%%%%%%%%%%%%%%%%%%%
%%%%%%%%%%%%%%%%%%%%%%%%%%%%%%%%%%%%%%%%%%%%%%%%%%%%%%%%%%%%%%%%%%%%%%%%%%%%%%%%%%%%%%%%%%%%%%%%%%%%%%%%%%%%%%%%%%%%%%%%%%%%%%%%%%%%%%%%%

	In our first result (Section~\ref{lprelax}), we point out that in the worst case LP relaxations plus sparse inequalities provide a very weak approximation of the integer hull. 
	
\begin{thm}\label{thm:lp}
	For every even integer $n$ there is a polytope $Q_n \subseteq [0, 1]^n$ such that:
\begin{enumerate}
\item $\mathcal{P}_{n/2,n} = \textup{conv}(Q_n \cap \mathbb{Z}^n)$
\item $d(\mathcal{P}_{n/2,n}, (\mathcal{P}_{n/2,n})^k \cap Q_n) =  \Omega\left(   \sqrt{n}   \right)$ for all $k \leq n/2.$
\end{enumerate}
\end{thm}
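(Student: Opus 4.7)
The plan is to take
\begin{equation*}
Q_n \;=\; \Bigl\{ x \in [0,1]^n : \sum_{i \in T} x_i \leq n/2 \text{ for every } T \in \binom{[n]}{n/2+1} \Bigr\}.
\end{equation*}
Each defining inequality of $Q_n$ is valid for $\mathcal{P}_{n/2,n}$ (it is implied by $\sum_i x_i \le n/2$), so $\mathcal{P}_{n/2,n} \subseteq Q_n$ and in particular every $0/1$ vertex of $\mathcal{P}_{n/2,n}$ lies in $Q_n$. Conversely, if $x \in \{0,1\}^n \cap Q_n$ had support $S$ with $|S| \ge n/2+1$, then any $T \subseteq S$ of size $n/2+1$ would give $\sum_{i\in T} x_i = n/2+1$, violating the corresponding cut. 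Hence every $0/1$ point of $Q_n$ has at most $n/2$ ones, so $Q_n \cap \mathbb{Z}^n = \mathcal{P}_{n/2,n} \cap \mathbb{Z}^n$, establishing claim~(i).

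For claim~(ii) I would first show $(\mathcal{P}_{n/2,n})^k = [0,1]^n$ whenever $k \le n/2$. Given a $k$-sparse inequality $\alpha x \le \beta$ valid for $\mathcal{P}_{n/2,n}$, with support $S$ of size at most $k$, its maximum over $[0,1]^n$ is attained at a $0/1$ point that can be chosen with all coordinates outside $S$ equal to $0$; such a point has at most $k \le n/2$ ones, so it already lies in $\mathcal{P}_{n/2,n}$, which means the inequality is automatically valid on the full cube. The reverse inclusion $(\mathcal{P}_{n/2,n})^k \subseteq [0,1]^n$ comes from the $1$-sparse box inequalities, so equality follows. Consequently $(\mathcal{P}_{n/2,n})^k \cap Q_n = Q_n$, and it suffices to exhibit a single point of $Q_n$ at distance $\Omega(\sqrt{n})$ from $\mathcal{P}_{n/2,n}$.

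I would take $y = \alpha\,\mathbf{1}$ with $\alpha = n/(n+2)$, chosen precisely so that $\alpha(n/2+1) = n/2$; this places $y$ on every defining facet of $Q_n$ and hence in $Q_n$. By symmetry the nearest point of $\mathcal{P}_{n/2,n}$ to $y$ is the orthogonal projection of $y$ onto the hyperplane $\{x : \sum_i x_i = n/2\}$, namely $\tfrac{1}{2}\mathbf{1}$, which still lies in $[0,1]^n$ and thus in $\mathcal{P}_{n/2,n}$. The resulting distance is
\begin{equation*}
\|y - \tfrac{1}{2}\mathbf{1}\| \;=\; \sqrt{n}\,\bigl(\alpha - \tfrac{1}{2}\bigr) \;=\; \frac{\sqrt{n}\,(n-2)}{2(n+2)} \;=\; \Omega(\sqrt{n}),
\end{equation*}
which finishes the argument.

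The conceptual point, and the only step requiring any care, is that the single dense cut $\sum_i x_i \le n/2$ would instantly close the gap, but by smearing its effect across the family of $(n/2+1)$-subset cuts we preserve the same integer hull while leaving the diagonal direction slack. The identity $(\mathcal{P}_{n/2,n})^k = [0,1]^n$ for $k \le n/2$ is essentially the observation underlying Proposition~\ref{prop:pk}(2); I do not anticipate any real obstacle beyond writing that verification out carefully.
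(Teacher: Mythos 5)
Your proof is correct and follows the paper's argument exactly: the same polytope $Q_n$ cut out by the $\binom{n}{n/2+1}$ subset inequalities, the same reduction to $(\mathcal{P}_{n/2,n})^k = [0,1]^n$ (which the paper cites from Proposition~\ref{prop:pk}(2) and you re-derive directly), and the same witness point $\frac{n}{n+2}\mathbf{1}$ projecting to $\frac{1}{2}\mathbf{1}$. The only difference is cosmetic — you spell out verifications the paper labels ``straightforward.''
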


In Section~\ref{strength} we consider the second question: How well does the approximation improve if we allowed a budgeted number of dense valid inequalities. Notice that for the polytope $\mathcal{P}_{\frac{n}{2},n}$, while Proposition \ref{prop:pk} gives that $d(\P_{\frac{n}{2},n}, \P_{\frac{n}{2},n}^k) \ge \Omega(\sqrt{n})$, adding exactly \emph{one} dense cut $\left(    ex\leq n/2    \right)$ to the $k$-sparse closure (even for $k = 1$) would yield the original polytope $\mathcal{P}_{\frac{n}{2},n}$.

	We consider instead the symmetrized polytope $\overline{\mathcal{P}_{\frac{n}{2},n}}$. Notice that while this polytope needs $2^n$ dense inequality to be described \emph{exactly}, it could be that a small number of dense inequalities, together with sparse cuts, is already enough to provide a good approximation; we observe that in higher dimensions valid cuts for $\overline{\mathcal{P}_{\frac{n}{2},n}}$ can actually cut off significant portions of $[-1,1]^n$ in \emph{multiple orthants}. We show, however, that in this even \emph{exponentially} many dense inequalities do not improve the approximation significantly.

\begin{thm} \label{thmbadset}
	Consider an even integer $n$ and the polytope $P = \overline{\P_{\frac{n}{2},n}}$. For any $k \leq n/100$ and any set $\mathcal{D}$ of valid inequalities for $P$ with $|\mathcal{D}| \le \exp\left( \frac{n}{600^2} \right)$, we have
$$ d\left(P,P^{k,\mathcal{D}} \right) \ge \frac{1}{6} \sqrt{n}.$$
\end{thm}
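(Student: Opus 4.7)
The plan is to find, for any such $\mathcal{D}$, a vertex of the cube $[-1,1]^n$ that lies in $P^{k,\mathcal{D}}$. Maximizing the expression $\sum_{i\in I}x_i-\sum_{i\notin I}x_i$ over $I\subseteq[n]$ in the description of Lemma~\ref{tildepk} gives $P=\{x\in[-1,1]^n:\|x\|_1\le n/2\}$, and a direct Lagrangian computation shows that every cube vertex is at Euclidean distance exactly $\sqrt{n}/2$ from $P$; hence producing any such vertex in $P^{k,\mathcal{D}}$ forces $d(P,P^{k,\mathcal{D}})\ge\sqrt{n}/2\ge\sqrt{n}/6$. The second part of Lemma~\ref{tildepk} further yields that the $k$-sparse closure $\overline{\mathcal{P}_{n/2,n}}^k$ is defined by inequalities of the form $\sum_{i\in I^+}x_i-\sum_{i\in I^-}x_i\le n/2$ with $|I^+|+|I^-|\le k\le n/100$; each such left-hand side is bounded by $k<n/2$ on $[-1,1]^n$, so every $k$-sparse constraint is vacuous, $P^k=[-1,1]^n$, and
$$P^{k,\mathcal{D}}=[-1,1]^n\cap\bigcap_{(a,b)\in\mathcal{D}}\{x:ax\le b\}.$$

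The heart of the argument is a concentration bound for a uniformly random cube vertex: for every inequality $ax\le b$ valid for $P$,
$$\Pr_{v\in\{-1,1\}^n}(av>b)\le e^{-n/c_0}$$
for some absolute $c_0<600^2$. Granting this, a union bound over $\mathcal{D}$ gives $\Pr(\exists\,(a,b)\in\mathcal{D}:av>b)\le|\mathcal{D}|\,e^{-n/c_0}<1$ whenever $|\mathcal{D}|\le\exp(n/600^2)$, so some $v\in\{-1,1\}^n$ satisfies every inequality in $\mathcal{D}$ and hence lies in $P^{k,\mathcal{D}}$.

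To establish the concentration bound I would reduce to $a\ge 0$ via the sign-flipping bijection $v\mapsto(\mathrm{sgn}(a_i)v_i)_i$ on the cube, sort $a_1\ge\cdots\ge a_n$, and use $b\ge\max_{x\in P}ax=\sum_{i\le n/2}a_i=:b_*$. If $|\mathrm{supp}(a)|\le n/2$ then $av\le\|a\|_1=b_*$ deterministically and the probability is $0$. Otherwise the analysis splits into two regimes. In the \emph{spread} regime, where $b_*^2/\|a\|_2^2$ is at least of order $n$, Hoeffding's inequality directly gives $\Pr(av>b_*)\le\exp(-b_*^2/(2\|a\|_2^2))$ of the required order. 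The delicate \emph{skewed} regime (exemplified by $a=(1,\varepsilon,\ldots,\varepsilon)$ with $\varepsilon$ small, for which a na\"{\i}ve Hoeffding bound is only $O(1)$) requires conditioning on a few heaviest coordinates of $a$ and applying Hoeffding to the remaining light part; adaptively choosing the heavy/light threshold so that $b_*-\sum_{\text{heavy}}a_i$ remains comparable to the $\ell_2$-norm of the light tail times $\sqrt{n}$ recovers an $e^{-\Omega(n)}$ tail. The main obstacle will be managing this case split uniformly in $a$ so that the final constant $c_0$ stays strictly below $600^2$; the generous constants $1/600^2$ and $1/6$ in the statement reflect the slack needed to absorb the losses in this peeling argument.
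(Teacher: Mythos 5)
Your high-level strategy --- pick a random cube vertex, bound the probability it violates each $(a,b)\in\mathcal{D}$, and union-bound --- matches the paper's, and your observations that $P^k=[-1,1]^n$ for $k\le n/2$ and that each cube vertex is at distance $\sqrt{n}/2$ from $P$ are correct. But there is a genuine gap that changes the nature of the argument: you want an \emph{unscaled} cube vertex $v\in\{-1,1\}^n$ to survive all of $\mathcal{D}$, which requires the sharp tail bound $\Pr_v(av>b)\le e^{-n/c_0}$ for \emph{every} valid $(a,b)$, and you never actually prove it. The regime you flag yourself is exactly where the difficulty lies: writing $aX=\bar aX+\underline aX$ with $\bar a$ the top-$k$ entries, you have $\bar aX\le b$ deterministically, but the light part $\underline aX$ has mean zero, so $\Pr(\underline aX\le 0)\approx 1/2$; to push $aX$ below $b$ you would need $\underline aX$ to be below its (positive) slack $b_*-\sum_{i\le k}a_i$, and bounding that uniformly in $a$ with an explicit constant below $600^2$ requires the multi-scale peeling you only gesture at. The paper dodges this entirely by \emph{rescaling}: it shows $aX\le b+\underline aX$ and uses Bernstein (after bounding $\Var(\underline aX)\le b(n-k)/k^2$ via the sorted-coefficient averaging argument) to get $\underline aX\le b/2$ with probability $>1-1/d$; then $\tfrac23 X$ satisfies $ax\le b$. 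The $2/3$ slack absorbs the fluctuation and removes the need to control $\underline aX$ below the much smaller threshold $S=\sum_{k<i\le n/2}a_i$, which is what makes the constants tractable. Your route, if it can be completed, would give the stronger lower bound $\sqrt{n}/2$, but as written the central concentration claim is an unproved assertion, and the Hoeffding estimate you cite ($e^{-b_*^2/2\|a\|_2^2}$) demonstrably fails to be $e^{-\Omega(n)}$ for peaked $a$ (e.g.\ $a_i\propto i^{-3/4}$ gives only $e^{-O(\sqrt n)}$), so the adaptive conditioning is genuinely load-bearing and not a routine refinement.
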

	In the proof of this theorem we use a probabilistic approach to count in how many orthants an inequality can significantly cut off the box $[-1,1]^n$.
	
In Section~\ref{sec:rotation} we consider the question of sparse approximation of a polytope when rotations are allowed. We show that again $\overline{\mathcal{P}_{n/2,n}}$ cannot be approximated using sparse inequalities after \emph{any} rotation is applied to it.

\begin{thm}\label{thm:rotation}
	Consider an even integer $n$ and the polytope $P = \overline{\P_{\frac{n}{2},n}}$. For every rotation $R : \R^n \rightarrow \R^n$ and $k  \leq\frac{{n}     }{200^3}$, we have
\begin{align*}
d   \big(  R(P), (R(P))^k     \big) &=\Omega( \sqrt{n}) .
\end{align*}
\end{thm}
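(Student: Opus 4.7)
Since rotations preserve the Hausdorff distance, $d(R(P), R(P)^k) = d(P, Q_R)$ where $Q_R := R^{-1}(R(P)^k)$. Rewriting the rotated sparse cuts in the original coordinates, one sees that
\begin{equation*}
Q_R = \{y \in \R^n : v \cdot y \le h_P(v)\ \forall\, v \in V_R\}, \qquad V_R := \bigcup_{T \in \binom{[n]}{k}} \mathrm{span}\{R^{T} e_i : i \in T\},
\end{equation*}
so $Q_R$ is an outer approximation of $P$ defined by halfspaces whose normals lie in one of $\binom{n}{k}$ fixed $k$-dimensional subspaces (here $h_P$ denotes the support function of $P$, which by Lemma~\ref{tildepk} admits the formula $h_P(u) = \sum_{i=1}^{n/2}|u|_{(i)}$). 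The plan is to reduce to Theorem~\ref{thmbadset} by exhibiting $k' \le n/100$ and a family $\mathcal{D}$ of valid dense inequalities for $P$ with $|\mathcal{D}| \le \exp(n/600^2)$ and $P^{k',\mathcal{D}} \subseteq Q_R$, so that Theorem~\ref{thmbadset} immediately gives $d(P, Q_R) \ge d(P, P^{k',\mathcal{D}}) \ge \sqrt n / 6$.

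The combinatorics work out: for $k \le n/200^3$ one gets $\binom{n}{k} \le \exp(n/(22\cdot 600^2))$, so there is room to place a constant-sized $\varepsilon$-net of the unit sphere within each subspace $S_T := \mathrm{span}\{R^{T} e_i : i \in T\}$ and still keep $|\mathcal{D}| \le \binom{n}{k}(3/\varepsilon)^k \le \exp(n/600^2)$. To verify $P^{k',\mathcal{D}} \subseteq Q_R$, take any $y \in P^{k',\mathcal{D}}$ and any unit $v \in V_R$, say with $v \in S_T$. Choose $v' \in \mathcal{D} \cap S_T$ with $\|v - v'\| \le \varepsilon$; since the $k'$-sparse box cuts force $\|y\|_\infty \le 1$ and hence $\|y\| \le \sqrt n$, we get $v \cdot y \le v' \cdot y + \varepsilon\sqrt n \le h_P(v') + \varepsilon\sqrt n \le h_P(v) + O(\varepsilon\sqrt n)$ using that $h_P$ is $\sqrt{n/2}$-Lipschitz. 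Taking $\varepsilon$ a sufficiently small absolute constant, this slack is smaller than $h_P(v)$ whenever $v$ is a ``dense'' direction, i.e.\ $h_P(v) = \Omega(\sqrt n)$.

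The main obstacle is handling the ``sparse-like'' directions $v \in V_R$ whose value $h_P(v)$ is much smaller than $\sqrt n$, for which the perturbation error dominates $h_P(v)$. The key structural observation is that the pointwise bound $|v|_{(i)} \le h_P(v)/i$ (valid for $i\le n/2$, obtained by averaging the top $i$ values) forces any $v$ with small $h_P(v)$ to concentrate most of its $\ell_2$-mass on few standard-basis coordinates, so that the cut $v\cdot y \le h_P(v)$ is essentially implied by the $k'$-sparse closure $P^{k'}$ rather than needing explicit coverage by $\mathcal{D}$. Making this precise, while balancing the net parameter $\varepsilon$, the sparsity threshold $k'$, and the truncation accuracy, is the delicate part of the proof. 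The factor $200^3$ in the hypothesis $k \le n/200^3$ reflects exactly three independent sources of slack---the subspace count $\binom{n}{k}$, the net size $(3/\varepsilon)^k$, and the Lipschitz/perturbation loss---that must together fit inside the $\exp(n/600^2)$ budget of Theorem~\ref{thmbadset}.
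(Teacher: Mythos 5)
The high-level plan is sound (reduce to Theorem~\ref{thmbadset} by finding a small family $\mathcal D$ of valid dense inequalities with $P^{k',\mathcal D}\subseteq Q_R$), your characterization of $Q_R$ and the formula $h_P(u)=\sum_{i=1}^{n/2}|u|_{(i)}$ are both correct, and the analysis of the ``dense'' directions $v$ with $h_P(v)=\Omega(\sqrt n)$ via a Euclidean $\epsilon$-net is fine. However, the sparse-like directions are not a ``delicate'' bookkeeping issue --- they are a genuine obstruction to the Euclidean-net approach, and the structural observation you invoke does not close the gap. Concretely: since $k'\le n/100\le n/2$, the sparse closure is just $P^{k'}=[-1,1]^n$, so the only $k'$-sparse information you get on $y$ is $\|y\|_\infty\le1$. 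Splitting $v=v_{\mathrm{head}}+v_{\mathrm{tail}}$ and using the $k'$-sparse cut $v_{\mathrm{head}}\cdot y\le h_P(v_{\mathrm{head}})$ plus $v_{\mathrm{tail}}\cdot y\le\|v_{\mathrm{tail}}\|_1$ gives at best $v\cdot y\le\|v\|_1\le 2h_P(v)$; the $\ell_2$-concentration bound $\|v_{\mathrm{tail}}\|_2\le h_P(v)\sqrt{2/k'}$ combined with $\|y\|_2\le\sqrt n$ is even worse (a factor $\sqrt{2n/k'}\approx 14$). So for directions with $h_P(v)\approx\epsilon\sqrt n$, neither the net cuts (additive error $\Theta(\epsilon\sqrt n)$) nor the box cuts (multiplicative factor $2$) certify $v\cdot y\le h_P(v)$, and even a multiplicative relaxation $P^{k',\mathcal D}\subseteq(1+\delta)Q_R$ would require $\delta\ge1$ on that regime, which is useless.

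The underlying issue is that an $\epsilon$-net in the \emph{Euclidean} metric on unit normals gives an additive $\epsilon\sqrt n$ error uniformly, whereas what is needed is a net adapted to the body's own norm so that the error is multiplicative. This is exactly what Lemma~\ref{lem:centralmod} (the polar of the Barvinok/Pisier bound, Theorem~\ref{lem:central}) supplies. The paper's proof writes $(RP)^k=\bigcap_{K}Q_K$ with $Q_K=RP+0^K\times\R^{\bar K}$, applies Lemma~\ref{lem:centralmod} to each $k$-dimensional projection $\mathrm{proj}_K Q_K$ to obtain a polytope $H_K$ with $(3/\epsilon)^k$ facets and $Q_K\subseteq H_K\subseteq(1+\epsilon)Q_K$, and sets $H=\bigcap_K H_K$; the crucial point is that $\mathcal D$ is then taken to be the facets of $R^{-1}(H\cap C)$ (with $C=R([-1,1]^n)$), so the containment $P^{k',\mathcal D}\subseteq R^{-1}(H\cap C)$ needed to invoke Theorem~\ref{thmbadset} is automatic --- no inequality-by-inequality coverage argument is required. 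The remaining multiplicative slack $(1+\epsilon)$ and the dilation of the half-cube are then controlled by a triangle-inequality argument (Claim~2 in the paper) after intersecting with $C$ to bound $\|\bar x\|\le\sqrt n$. So to repair your argument you would either have to replace the Euclidean $\epsilon$-net on each $S_T$ by a net in the Minkowski gauge of $S_T\cap P^\circ$ (i.e.\ essentially rederive Lemma~\ref{lem:centralmod}), or adopt the paper's route of approximating $(RP)^k$ globally and feeding the resulting facets to Theorem~\ref{thmbadset} as $\mathcal D$.
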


	The proof of this theorem relies on the intuition given by Theorem \ref{thmbadset}: since $\overline{\P_{\frac{n}{2},n}}$ required exponentially many dense inequalities in order to be well approximated, no rotation is able to align all of them with the axis so that they can be captured by sparse inequalities.

%Notice that $d  \left( \overline{P_{n/2, n}}, \overline{P_{n/2, n}}^n     \right)$ is attained for $c=\frac{1}{\sqrt{n}}e$, which gives $gap^{n/2}_{\overline{P_{n/2, n}}}(c)=\frac{\sqrt{n}}{2}$. 
	Finally, in Section \ref{sec:alldirection} we show that $\overline{\P_{\frac{n}{10},n}}$ and its $k$-sparse approximation have a large gap in almost every direction. 
%	a slight variation of the cross polytope, in fact most directions $c$ give $gap^{\Theta(n)}_{\mathcal{P}_{n/10,n}}(c) \ge \Omega(\sqrt{n})$.
\begin{thm}\label{thm:directional}
Let $n \geq 1000$ be an integer divisible by $10$ and consider the polytope $P = \overline{       \mathcal{P}_{n/10,n}     }$. If $C \in \R^n$ is a random direction uniformly distributed on the unit sphere, then for $k \leq \frac{n}{10}$ we have
\begin{align*}
			\Pr\left( gap^k_{P}(C)\geq \frac{\sqrt{n}}{20} \right) &\ge 1 - \frac{4}{n}.
\end{align*}
\end{thm}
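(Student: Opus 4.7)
The plan rests on the observation that, for the range of $k$ considered, the $k$-sparse closure $P^k$ collapses to the cube $[-1,1]^n$. By Lemma~\ref{tildepk}, every defining inequality of $\overline{\P_{t,n}}^k$ has the form $\sum_{i\in I^+} x_i - \sum_{i\in I^-} x_i \le t$ for a partition $(I^+,I^-)$ of some $I\in \binom{[n]}{k}$; but for $x\in[-1,1]^n$ the left-hand side is at most $|I| = k \le n/10 = t$, so every such cut is vacuous on the cube. Hence $P^k = [-1,1]^n$ and $\max_{x\in P^k} cx = \|c\|_1$. On the other hand, Lemma~\ref{tildepk} identifies $P = \{x\in[-1,1]^n : \|x\|_1 \le t\}$, so the LP $\max_{x\in P} cx$ is solved by setting $x_i = \mathrm{sign}(c_i)$ on the $t$ coordinates of largest magnitude, giving $\sum_{i=1}^t |c_{(i)}|$. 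Putting these together,
\begin{align*}
gap^k_P(c) &= \sum_{i=t+1}^n |c_{(i)}|,
\end{align*}
the sum of the $9n/10$ \emph{smallest} coordinate magnitudes of $c$.

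What remains is to show this sum is at least $\sqrt{n}/20$ with probability $1-4/n$ when $c=C$ is uniform on $S^{n-1}$. I would use the standard representation $C = G/\|G\|_2$ with $G \sim \mathcal{N}(0,I_n)$ and bound numerator and denominator separately. For the denominator, Chebyshev on the chi-squared variable $\|G\|_2^2$ (mean $n$, variance $2n$) gives $\|G\|_2 \le \sqrt{2n}$ with probability at least $1-2/n$. For the numerator, I would fix a constant threshold $\tau = 1/2$ (noting $\Pr(|G_1|\ge 1/2) > 3/5$), let $N = |\{i : |G_i|\ge\tau\}|$, and apply Hoeffding's inequality to the i.i.d.\ Bernoulli indicators to conclude $N \ge n/2$ with probability $1 - e^{-\Omega(n)} \le 1 - 1/n$ for $n \ge 1000$. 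At most $t = n/10$ of these large-magnitude coordinates lie in the top $t$, so the remaining $N - t \ge 2n/5$ all sit in positions $(t+1),\dots,(n)$ and each contributes at least $\tau$, yielding $\sum_{i=t+1}^n |G_{(i)}| \ge n/5$.

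Combining via the union bound, with probability at least $1 - 3/n \ge 1 - 4/n$,
\begin{align*}
gap^k_P(C) &\ge \frac{n/5}{\sqrt{2n}} \ge \frac{\sqrt{n}}{20}.
\end{align*}
The only step doing real probabilistic work is the Hoeffding bound on $N$; everything else is either a direct application of Lemma~\ref{tildepk} or a routine Gaussian tail estimate. I expect the main thing to babysit is the bookkeeping of constants: one must verify that $\tau = 1/2$ together with the $N \ge n/2$ threshold leaves enough slack after subtracting off the top $t = n/10$ coordinates and dividing by $\|G\|_2 \le \sqrt{2n}$, so that the final constant is at most $1/20$. A slightly smaller $\tau$ would give more room and could be used if a cleaner final arithmetic is desired.
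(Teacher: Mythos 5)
Your proposal is correct, and it takes a genuinely different route from the paper. Both proofs begin identically by observing that $P^k=[-1,1]^n$ when $k\le n/10$, so $\max_{x\in P^k}cx=\|c\|_1$. From there the paths diverge. The paper bounds $\max_{x\in P}Gx$ (with $G$ Gaussian) by a union bound over the roughly $e^{0.6n}$ extreme points of $P$, applying Gaussian Lipschitz concentration (Theorem~\ref{thm:concGauss}) to each linear functional, and separately lower bounds $\sum_i|G_i|$ by Lipschitz concentration; this yields $0.7n$ vs.\ $0.6n$, divided by $\|G\|\le 2\sqrt n$. You instead solve the inner LP over $P$ exactly --- since $P=\{x\in[-1,1]^n:\|x\|_1\le t\}$ the maximum is attained by saturating the $t$ coordinates of largest magnitude --- and so derive the clean identity $gap^k_P(c)=\sum_{i=t+1}^n|c_{(i)}|$, the sum of the $n-t$ smallest magnitudes. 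Lower bounding that quantity requires only a single Hoeffding bound on the count $N$ of coordinates with $|G_i|\ge 1/2$ (together with $N-t$ of them necessarily falling among the bottom $n-t$ order statistics), plus Chebyshev on $\|G\|_2^2$. This avoids the union bound over exponentially many vertices entirely, uses only elementary concentration inequalities rather than Gaussian Lipschitz concentration, and even gives the slightly sharper failure probability $3/n$. The arithmetic checks out: $\Pr(|G_1|\ge 1/2)\approx 0.617>3/5$, so $N\ge n/2$ w.h.p., leaving at least $2n/5$ contributions of at least $1/2$ below the top $t=n/10$, hence $\sum_{i=t+1}^n|G_{(i)}|\ge n/5$, and $(n/5)/\sqrt{2n}=\sqrt{n}/(5\sqrt{2})>\sqrt{n}/20$, with ample slack.
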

	
	To prove this theorem we rely on the concentration of the value of Lipschitz functions on the sphere (actually we work on the simpler Gaussian space). 

%%%%%%%%%%%%%%%%%%%%%%%%%%%%%%%%%%%%%%%%%%%%%%%%%%%%%%%%%%%%%%%%%%%%%%%%%%%%%%%%%%%%%%%%%%%%%%%%%%%%%%%%%%%%%%%%%%%%%%%%%%%%%%%%%%%%%%%%%
%%%%%%%%%%%%%%%%%%%%%%%%%%%%%%%%%%%%%%%%%%%%%%%%%%%%%%%%%%%%%%%%%%%%%%%%%%%%%%%%%%%%%%%%%%%%%%%%%%%%%%%%%%%%%%%%%%%%%%%%%%%%%%%%%%%%%%%%%

\section{Strengthening of $LP$ relaxation by sparse inequalities}
\label{lprelax}

	We now present a short proof of Theorem \ref{thm:lp}. Consider the polytope
$$Q_{n} = \left\{x\in \left[0,1\right]^n:\sum_{i\in I} x_i \leq \frac{n}{2}\quad \forall I \in {\left[n\right]\choose \frac{n}{2}+1}\right\}.$$
It is straightforward to verify that $\mathcal{P}_{n/2,n} = \textup{conv}(Q_n \cap \mathbb{Z}^n)$.

From Part (2) of Proposition \ref{prop:pk}, $\mathcal{P}_{n/2,n}^k=\left[0,1\right]^n$ thus $Q_{n}\cap \mathcal{P}_{n/2,n}^k = Q_{n}$. Now $x=\frac{n}{n+2}e$ belongs to $Q_{n}$ and its projection onto $\mathcal{P}_{n/2,n}$ corresponds to $y=\frac{1}{2}e$. Therefore,
$$d  \left(       \mathcal{P}_{n/2,n},\mathcal{P}_{n/2,n}^k\cap Q_n          \right) = \frac{n - 2}{2n + 4}\sqrt{n}=\Omega(\sqrt{n}).$$ This concludes the proof of the theorem. 

%%%%%%%%%%%%%%%%%%%%%%%%%%%%%%%%%%%%%%%%%%%%%%%%%%%%%%%%%%%%%%%%%%%%%%%%%%%%%%%%%%%%%%%%%%%%%%%%%%%%%%%%%%%%%%%%%%%%%%%%%%%%%%%%%%%%%%%%%
%%%%%%%%%%%%%%%%%%%%%%%%%%%%%%%%%%%%%%%%%%%%%%%%%%%%%%%%%%%%%%%%%%%%%%%%%%%%%%%%%%%%%%%%%%%%%%%%%%%%%%%%%%%%%%%%%%%%%%%%%%%%%%%%%%%%%%%%%

\section{Strengthening by general dense cuts}\label{strength}

	Now we turn to the proof of Theorem \ref{thmbadset}. For that we will need Bernstein's concentration inequality (stated in a slightly weaker but more convenient form).
% (a modified version of Bernstein's inequality) in our proof:

\begin{thm}[\cite{koltchinskii2011oracle}, Appendix A.2]
Let $X_1,X_2,\ldots,X_n$ be independent random variables such that $\E \left[ X_i \right]=0$ and $|X_i|\leq M\ \forall i$. Let $X=\sum_{i=1}^n X_i$ and $\sigma^2=\Var(X) \leq U$. Then:
\begin{align*}
\Pr\left(|X|>w\right) &\leq \exp\left(-\min\left\{  \frac{w^2}{4U},\frac{3w}{4M}  \right\}\right).
\end{align*}
\label{bern}
\end{thm}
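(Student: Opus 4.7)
My plan is to prove this along the standard Cram\'er--Chernoff route: bound the moment generating function (MGF) of each $X_i$ using the Taylor expansion of $\exp$ together with the variance and uniform bound hypotheses, take a product by independence, then optimize the resulting exponential Markov bound in the tilt parameter $\lambda$.

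Concretely, for any $\lambda > 0$, Markov's inequality gives
\[
\Pr(X > w) \;\leq\; e^{-\lambda w}\,\E[e^{\lambda X}] \;=\; e^{-\lambda w}\,\prod_{i=1}^n \E[e^{\lambda X_i}],
\]
where the equality uses independence. The first main step is to bound each $\E[e^{\lambda X_i}]$. Expanding $e^{\lambda X_i} = 1 + \lambda X_i + \sum_{k\geq 2} (\lambda X_i)^k/k!$, using $\E[X_i] = 0$, and then using $|X_i|^k \leq M^{k-2} X_i^2$ for $k \geq 2$ to relate higher moments to the variance, one obtains
\[
\E[e^{\lambda X_i}] \;\leq\; 1 + \frac{\Var(X_i)}{M^2}\bigl(e^{\lambda M} - 1 - \lambda M\bigr) \;\leq\; \exp\!\left(\frac{\Var(X_i)}{M^2}\,(e^{\lambda M} - 1 - \lambda M)\right),
\]
where the last step uses $1 + x \leq e^x$. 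Multiplying these bounds and using $\sum_i \Var(X_i) = \sigma^2 \leq U$ yields
\[
\E[e^{\lambda X}] \;\leq\; \exp\!\left(\frac{U}{M^2}\,(e^{\lambda M} - 1 - \lambda M)\right).
\]

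The second main step is the optimization. The convenient bound $e^y - 1 - y \leq y^2/(2(1 - y/3))$ for $y \in [0,3)$ simplifies the exponent and, choosing $\lambda = w/(U + Mw/3)$, gives the classical Bernstein form
\[
\Pr(X > w) \;\leq\; \exp\!\left(-\,\frac{w^2}{2(U + Mw/3)}\right).
\]
A standard case split then converts this into the stated form: if $Mw \leq 3U$ then $2(U + Mw/3) \leq 4U$, so the exponent is at least $w^2/(4U)$; if $Mw > 3U$ then $2(U + Mw/3) \leq 4Mw/3$, so the exponent is at least $3w/(4M)$. In both cases the one-sided bound is at most $\exp\!\bigl(-\min\{w^2/(4U),\,3w/(4M)\}\bigr)$. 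Applying the same argument to $-X$ (whose summands satisfy identical hypotheses) and a union bound yields the two-sided inequality for $\Pr(|X| > w)$; the spurious factor of $2$ from the union bound is either absorbed into the slightly loose numerical constants $4$ and $4/3$ in the exponent, or noted to be harmless in the regime where the bound is nontrivial.

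I expect the main obstacle to be the MGF bound step, since obtaining a clean exponent requires careful use of $\E[X_i]=0$ to kill the linear term and a slightly delicate moment comparison $\E[X_i^k] \leq M^{k-2}\Var(X_i)$ so that the resulting series sums to $(e^{\lambda M} - 1 - \lambda M)/M^2$ times the variance. After that, the optimization in $\lambda$ and the two-regime rewriting are routine calculus.
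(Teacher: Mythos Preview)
The paper does not prove this statement at all: it is quoted as a known result from the cited reference \cite{koltchinskii2011oracle} (and the paper even notes it is ``stated in a slightly weaker but more convenient form''), so there is no paper proof to compare against.

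Your argument is the standard Cram\'er--Chernoff derivation of Bernstein's inequality and is correct. The MGF bound, the simplification $e^y-1-y\le y^2/(2(1-y/3))$, the choice $\lambda=w/(U+Mw/3)$, and the case split all go through exactly as you describe. The only loose end is the factor $2$ from the union bound for the two-sided inequality: contrary to your remark, it is not literally absorbed by the constants $4$ and $4/3$, since your one-sided computation already lands exactly on $\exp(-\min\{w^2/(4U),3w/(4M)\})$. Rather, the paper's version is simply a slightly weakened restatement (as it explicitly says), and for the application in Section~\ref{strength} the extra factor $2$ would be immaterial anyway.
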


	Notice that to prove the theorem it suffices to consider the case $k = \frac{n}{100}$, which is what we do. Recall that $P = \mathcal{P}_{n/2,n}$, consider any set $\mathcal{D}$ of valid inequalities for $P$ with $\|\mathcal{D}| \le \exp(\frac{n}{600^2})$; for convenience let $d=|\mathcal{D}|$. From Lemma \ref{tildepk} we know $P^k$ contains all the points in $\{-1,1\}^n$. Also note that for any $\bar{x} \in \{-1,1\}^n$ achieves the maximal distance in $P^k$ from $P$, namely $d (   P, P^k  ) = d(P, \bar{x}) = \frac{1}{2} \sqrt{n}$. We then consider a random such ``bad'' point $X$, namely $X$ is uniformly distributed in $\{-1,1\}^n$ (equivalently, the $X_i$'s are independent and uniformly distributed over $\left\{-1, 1 \right\}$). We will show that there exist an instantiation of the scaled random  $\frac{2 X}{3}$ which belongs to $P^{k, \mathcal{D}}$, which will then lower bound the distance $d(P, P^{k, \mathcal{D}})$ by $d(P, \frac{2 \bar{x}}{3}) = \frac{1}{6} \sqrt{n}$ (for some $\bar{x} \in \{-1,1\}^n$) and thus prove the result. 
	
	To achieve this, consider a single inequality $ax \le b$ from $\mathcal{D}$ (we assume without loss of generality that $\|a\|_1 = 1$). We claim that with probability more than $1 - \frac{1}{d}$, the point $\frac{2 X}{3}$ satisfies this inequality. By symmetry of $X$, we can assume without loss of generality that $a \ge 0$. To prove this, let $\bar{a}$ be the vector obtained by keeping the $k$ largest components of $a$ and zeroing out the other components (ties are broken arbitrarily), and let $\underline{a} = a - \bar{a}$. Since $\bar{a}x\leq b$ is a $k$-sparse valid inequality for $P$ and $X \in P^k$, we have that
\begin{eqnarray}
aX=\bar{a}X+\underline{a}X \le b + \underline{a}X. \label{eq1}
\end{eqnarray}

\begin{claim}
	$\Var(\underline{a} X) \le  \frac{b (n-k)}{k^2}$.
\end{claim}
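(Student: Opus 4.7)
My plan is to reduce the claim to a bound on $\|\underline{a}\|_2^2$, then use two observations: that the coordinates of $\underline{a}$ are controlled by the average $\|\bar{a}\|_1/k$ because $\bar{a}$ keeps the $k$ largest entries of $a$, and that $\|\bar{a}\|_1$ itself is at most $b$ because the truncated inequality $\bar{a}x \le b$ is a $k$-sparse valid inequality for $P$ that can be evaluated at an explicit $0/1$-vertex of $P$.

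The first step is a one-line variance computation: since the $X_i$ are independent $\pm 1$ random variables, each with variance $1$, linearity of variance gives $\Var(\underline{a} X) = \sum_i \underline{a}_i^2 = \|\underline{a}\|_2^2$, so it suffices to prove $\|\underline{a}\|_2^2 \le b(n-k)/k^2$. Next, since $\bar{a}$ collects the $k$ largest entries of the nonnegative vector $a$, every coordinate of $\underline{a}$ is bounded by the smallest entry of $\bar{a}$ on its support $T$, and hence by the average $\|\bar{a}\|_1/k$. To pass from $\|\bar{a}\|_1$ to $b$, I would use that $|T| = k \le n/100 \le n/2$, so $e_T$ lies in $\P_{n/2,n} \subseteq \overline{\P_{n/2,n}} = P$; plugging $e_T$ into the valid inequality $\bar{a}x \le b$ yields $\|\bar{a}\|_1 = \bar{a}\cdot e_T \le b$, and in particular $\max_i \underline{a}_i \le b/k$.

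Combining these pieces via the crude bound $\|\underline{a}\|_2^2 \le (\max_i \underline{a}_i)\cdot \|\underline{a}\|_1 \le (\|\bar{a}\|_1/k)\cdot (n-k)\cdot(\|\bar{a}\|_1/k)$ gives $\|\underline{a}\|_2^2 \le (n-k)\|\bar{a}\|_1^{\,2}/k^2$. To finish with a linear rather than quadratic dependence on $b$, I would rewrite $\|\bar{a}\|_1^{\,2} \le b\cdot \|\bar{a}\|_1 \le b\cdot \|a\|_1 = b$, using the validity-at-$e_T$ bound once and the assumed normalization $\|a\|_1 = 1$ once. The only place to be careful is exactly this final collapse from $b^2$ to $b$, which requires using both $\|\bar{a}\|_1 \le b$ and $\|a\|_1 = 1$ together; everything else is routine manipulation of the sparse structure.
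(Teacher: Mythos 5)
Your argument is correct and is essentially the same as the paper's: compute $\Var(\underline{a}X)=\|\underline{a}\|_2^2$, bound the entries of $\underline{a}$ by the average of the entries of $\bar{a}$, and convert $\|\bar{a}\|_1$ into $b$ by evaluating the $k$-sparse valid inequality $\bar{a}x\le b$ at a $0/1$ point of $P$. The only cosmetic difference is in how the normalization $\|a\|_1=1$ enters: the paper uses it up front to get $\underline{a}_i\le 1/k$ and then bounds $\|\underline{a}\|_1\le\tfrac{n-k}{k}\,\|\bar{a}\|_1\le\tfrac{n-k}{k}\,b$, while you arrive at $(n-k)\|\bar a\|_1^2/k^2$ and collapse $\|\bar a\|_1^2\le b\|\bar a\|_1\le b$ using $\|\bar a\|_1\le b$ together with $\|\bar a\|_1\le\|a\|_1=1$; these chains are trivially equivalent.
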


\begin{proof}
\let\qed\relax
	Since $\Var(X_i)=1$ for all $i \in [n]$, we obtain that
\begin{eqnarray}
\Var(\underline{a}X) &=\sum_{i=1}^n \underline{a}_i^2 \Var(X_i) = ||\underline a ||^2. \label{eq2}
\end{eqnarray}
Note that the $k$th largest component of $a$ is at most $1/k$ (otherwise $\|a\|_1 > 1$), hence $\underline{a}_i X_i\leq \frac{1}{k}$ for all $i$, so we have
\begin{eqnarray}
||\underline{a}||^2 = \sum_{i=1}^n (\underline{a}_iX_i)^2 \leq \frac{1}{k} \sum_{i=1}^n \underline{a}_i X_i. \label{eq3}
\end{eqnarray}
Moreover, by comparing averages of the components of $\bar{a}$ and $\underline{a}$ and then using $\bar{a} e \le b$, we have that
\begin{eqnarray}
\sum_{i=1}^n \frac{\underline{a}_i}{n-k} \le \sum_{i=1}^n \frac{\bar{a}_i}{k}  \le  \frac{b}{k}. \label{eq4}
\end{eqnarray}
Now by using (\ref{eq2})-(\ref{eq4}), we obtain the bound $\Var(\underline{a} X) \le  \frac{b (n-k)}{k^2}$, thus concluding the proof.  \hfill $\diamond$
\end{proof}

Now using the fact that $|\underline{a}_i X_i| \le \frac{1}{k}$, $\E(\underline{a}X)=0$  and the above bound on $\Var(\underline{a}X)$, we obtain by an application of Bernstein's inequality (Theorem \ref{bern}) with $w= 30 b \frac{\sqrt{\log d}}{\sqrt{k}} $:

\begin{align}
	\Pr\left(      \underline{a}  X \ge 30 b \cdot \frac{\sqrt{\log d}}{\sqrt{k}}       \right)    \le      \exp \left(      - \min\left\{        \frac{30^2 b \cdot k \cdot \log d}{4(n-k)},                                                   \frac{30}{4}\cdot 3 b \cdot \sqrt{k \log d}       \right\}           \right). \label{eq:bernstein}   
%   \\ 
%\label{eq5} &\le& \exp \left(- \frac{30^2}{4\cdot 99}b \cdot \log d \right) < \exp(- \log d) = \frac{1}{d}, 
\end{align}
	To upper bound the right-hand side of this expression, first we employ our assumption $d \le \exp(\frac{n}{600^2})$ and $k = \frac{n}{100}$ to obtain 
\begin{align*}
\sqrt{\log d} &\leq  \frac{1}{600}\sqrt{n} \leq  \frac{3\cdot 99}{30\cdot 10}\sqrt{n} = \frac{3}{30}\left(\frac{n - k}{k}\right)\sqrt{k}.
\end{align*}
	With this at hand, we have that the minimum in the right-hand side of \eqref{eq:bernstein} is achieved in the first term. Moreover, notice that $b \geq 1/2$: the point $p = (\frac{1}{2}, \ldots, \frac{1}{2})$ belongs to $P$ and hence $b \ge ap = \frac{1}{2} \|a\|_1 = 1/2$. Putting these observations together gives 
\begin{align*}
	\Pr\left(      \underline{a}  X \ge 30 b \cdot \frac{\sqrt{\log d}}{\sqrt{k}}       \right)  \le \exp \left(- \frac{30^2}{4\cdot 99}b \cdot \log d \right) < \exp(- \log d) = \frac{1}{d}.
\end{align*}

Then using \eqref{eq1} and the above inequality, we obtain that with probability more than $1-\frac{1}{d}$ we have
\begin{eqnarray}
aX &\le& b \left(1 + 30\frac{\sqrt{\log d}}{\sqrt{k}}   \right) \nonumber \\
\label{eq7} &= & b \left(1 + \frac{1}{2}\cdot 600\frac{\sqrt{\log d}}{\sqrt{n}}   \right) \le b\frac{3}{2},
\end{eqnarray}
where the first equality uses $k = \frac{n}{100}$ and the second inequality uses the assumption that $\sqrt{\log d} \leq  \frac{1}{600}\sqrt{n}$. Now note that (\ref{eq7}) implies that the point $\frac{2 X}{3}$ satisfies $ax \le b$ with probability more than $1 - \frac{1}{d}$.

	Since $|\mathcal{D}| = d$, we can then take a union bound over the above argument to get that with strictly positive probability $\frac{2X}{3}$ satisfies \emph{all} the inequalities in $\mathcal{D}$. Hence with strictly positive probability $\frac{2X}{3}$ belongs to $P^{k, \mathcal{D}}$ and in particular there is a point $\overline{x} \in \left\{ -1, 1\right\} ^n $ such that $\frac{2\overline{x}}{3} \in P^{k,\mathcal{D}}$.

	This gives the lower bound $d(P, P^{k,\mathcal{D}}) \ge d\left(         P, \frac{2\overline{x}}{3}       \right)$; now we lower bound the right-hand side. It is easy to see that the closest point in $\overline{P}$ to $2\overline{x}/3$ is $\overline{x}/2$, the projection onto $\overline{P}$. Since $||2\overline{x}/3-\overline{x}/2||= \frac{1}{6}||\overline{x}||$, we obtain that $d(\overline{P}, \overline{x}) \ge \frac{1}{6} \sqrt{n}$ which concludes the proof.

%%%%%%%%%%%%%%%%%%%%%%%%%%%%%%%%%%%%%%%%%%%%%%%%%%%%%%%%%%%%%%%%%%%%%%%%%%%%%%%%%%%%%%%%%%%%%%%%%%%%%%%%%%%%%%%%%%%%%%%%%%%%%%%%%%%%%%%%%
%%%%%%%%%%%%%%%%%%%%%%%%%%%%%%%%%%%%%%%%%%%%%%%%%%%%%%%%%%%%%%%%%%%%%%%%%%%%%%%%%%%%%%%%%%%%%%%%%%%%%%%%%%%%%%%%%%%%%%%%%%%%%%%%%%%%%%%%%

\section{Sparse approximation of rotations of a polytope}\label{sec:rotation}

	In this section we prove Theorem \ref{thm:rotation}; for that we need to recall some standard definitions from convex geometry.

	\begin{defn}
		Given a set $P \subseteq \R^n$:
		\begin{itemize}
			\item We say that $P$ is \emph{centrally symmetric} if $\forall x \in P:$ $-x\in P$.
			\item For any $\alpha \in \R$ we define the set $\alpha P := \left\{  \alpha   x : x\in P  \right\}$.
			\item The \emph{polar} of $P$ is the set $P^{\circ} = \left\{   z\in \R^n : zx\le 1 \ \forall x\in P\right\}$.
		\end{itemize}	
	\end{defn}

% An alternative representation is $P^*=\left\{   z\in \R^n : zx_i\le 1 \ \forall x_i\in Ext(P)  \right\}$. Which implies a one to one correspondence between the extreme points of $P$ and the facets of $P^*$.

%The next result shows that for a particular polytope (the bad simplex) every rotation is difficult to approximate.

%		\begin{thm}
%			Let $P$ be the symmetrized version of bad simplex defined by (\ref{badset}). Then for every rotation $R : \R^n \rightarrow \R^n$ and $ k  =\frac{      \sqrt{n}     }{100}$:
%\begin{align*}
%d(RP, (RP)^k) &=\Omega( \sqrt{n}) .
%\end{align*}
%		\end{thm}
		
%\gtrsim
%\approx

%		In the rest of the section we prove this result.

We also need the following classical result about approximating convex set by polytopes with few vertices (see for instance Lemma 4.10 of \cite{barvinok2013thrifty} and \cite{pisier1999volume})
\begin{thm}\label{lem:central}
For every centrally symmetric convex set $S \subseteq \R^k$, there is a polytope $S'$ with at most $(\frac{3}{\epsilon})^k$ {vertices} such that $S \subseteq S' \subseteq (1+\epsilon)S$
\end{thm}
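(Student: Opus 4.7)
My plan is to build $S'$ as the convex hull of a fine net on the boundary of $S$, suitably rescaled. Since $S$ is centrally symmetric with nonempty interior (otherwise the statement is trivial), its Minkowski gauge $\|x\|_S = \inf\{t>0 : x \in tS\}$ is a norm whose unit ball is exactly $S$; all distances below refer to this norm.

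First I would construct a maximal $\delta$-separated set $N \subseteq \partial S$ for a parameter $\delta$ tuned to a constant fraction of $\epsilon$. Maximality forces $N$ to also be a $\delta$-net of $\partial S$. The translates $v + (\delta/2)S$ for $v \in N$ are pairwise disjoint by $\delta$-separation and are all contained in $(1+\delta/2)S$; comparing $k$-dimensional volumes yields $|N|\cdot(\delta/2)^k \le (1+\delta/2)^k$, so $|N| \le (1+2/\delta)^k$, which can be absorbed into the claimed bound $(3/\epsilon)^k$ for an appropriate choice of $\delta$.

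Next I would set $P = \textup{conv}(N \cup -N)$; central symmetry gives $P \subseteq S$ and $P$ has at most $2|N|$ vertices. The heart of the argument is to show $(1-\delta)S \subseteq P$, via an iterative decomposition: for any $x \in S$ write $r = \|x\|_S \le 1$, pick $v_0 \in N \cup -N$ with $\|x/r - v_0\|_S \le \delta$, and set $x_1 = x - r v_0$, which satisfies $\|x_1\|_S \le r\delta$. Iterating this on $x_1$ and so on yields $x = \sum_{i\ge 0} r_i v_i$ with $v_i \in N \cup -N$ and $\sum_i r_i \le 1/(1-\delta)$. Since $0 = \tfrac{1}{2}(v + (-v)) \in P$, the scaled sum $(1-\delta)\sum_i r_i v_i$ can be padded with zero coefficients to form an honest convex combination of points of $N \cup -N \cup \{0\}$, proving $(1-\delta)x \in P$.

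Finally I would set $S' := (1-\delta)^{-1}P$. The previous step gives $S \subseteq S'$, and every vertex of $S'$ lies in $(1-\delta)^{-1}\partial S$, so $S' \subseteq (1-\delta)^{-1}S$. Choosing $\delta = \epsilon/(1+\epsilon)$ makes $(1-\delta)^{-1} = 1+\epsilon$ and yields $S \subseteq S' \subseteq (1+\epsilon)S$ as required. The main technical obstacle is the iterative decomposition in the third step, which shows that a fine enough net already generates nearly all of $S$ as a convex hull; the covering-number bound is a standard volume comparison and the final rescaling is routine.
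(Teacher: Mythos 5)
The paper does not actually prove this statement: it is quoted as a classical fact with pointers to Lemma 4.10 of Barvinok's notes and to Pisier's book, and is only \emph{used} (via Lemma \ref{lem:centralmod}) in the proof of Theorem \ref{thm:rotation}. So there is no in-paper argument to compare against; what you have written is, in essence, the standard proof from those references (a $\delta$-net on the boundary in the gauge norm, the packing/volume bound $|N| \le (1+2/\delta)^k$, and the successive-approximation step showing $\mathrm{conv}(N \cup -N) \supseteq (1-\delta)S$, followed by rescaling), and the argument is sound: the geometric decay $r_i \le \delta^i$ gives $\sum_i r_i \le 1/(1-\delta)$, and compactness of the polytope justifies passing to the limit of the padded convex combinations. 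The one place where you are glossing over something real is the vertex count: taking $N \cup -N$ doubles the cardinality, and with $\delta = \epsilon/(1+\epsilon)$ the packing bound gives $(3 + 2/\epsilon)^k$ points per half, so $2(3+2/\epsilon)^k$ does not sit below $(3/\epsilon)^k$ for all $k$ and $\epsilon$; ``absorbed for an appropriate choice of $\delta$'' cannot be made literally true by shrinking $\delta$, since that only increases $|N|$. The clean fix is to build the net symmetric from the start (greedily add antipodal pairs $\{v,-v\}$; since $\|v - (-v)\|_S = 2 > \delta$ a point never conflicts with its own antipode), so that $N = -N$ and no doubling occurs; alternatively one handles small $k$ or large $\epsilon$ separately, where the statement is easy. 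With that repair your proof is complete, and it is the same route as the cited sources rather than a genuinely different one.
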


By applying this result to the polar we obtain approximations with bounded number of \emph{facets} instead of vertices.

		\begin{lemma}\label{lem:centralmod}
			For every centrally symmetric conver set $C \subseteq \R^k$, there is a polytope $C'$ with at most $(\frac{3}{\epsilon})^k$ facets such that $C \subseteq C' \subseteq (1 + \epsilon) C$.
		\end{lemma}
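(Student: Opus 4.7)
The plan is to obtain the facet bound from the vertex bound by applying the preceding theorem to the polar of $C$ and dualizing, exploiting the fact that polarity swaps vertices with facets and reverses inclusions. First I would note that if $C$ is centrally symmetric then so is its polar $C^\circ$ (assuming $C$ is a centrally symmetric convex body, i.e.\ full-dimensional; otherwise one restricts to the affine span of $C$). So Theorem \ref{lem:central} applies to $C^\circ \subseteq \R^k$, producing a polytope $S'$ with at most $(3/\epsilon)^k$ vertices satisfying
\[
C^\circ \subseteq S' \subseteq (1+\epsilon)\, C^\circ.
\]

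Next I would take polars throughout. Using that polarity reverses inclusions, that $(\alpha D)^\circ = \frac{1}{\alpha} D^\circ$ for $\alpha > 0$, and that $C^{\circ\circ} = C$ for a closed convex set containing the origin in its interior, this chain of inclusions becomes
\[
\frac{1}{1+\epsilon}\, C \;\subseteq\; (S')^\circ \;\subseteq\; C.
\]
Scaling by $(1+\epsilon)$ and setting $C' := (1+\epsilon)\,(S')^\circ$ then gives the desired sandwich $C \subseteq C' \subseteq (1+\epsilon)\,C$.

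Finally I would count facets: $(S')^\circ$ is a polytope whose facets correspond bijectively to the vertices of $S'$ (standard vertex/facet duality, valid because $S'$ contains the origin in its interior, as it contains the centrally symmetric body $C^\circ$). Hence $(S')^\circ$ has at most $(3/\epsilon)^k$ facets, and the same holds for its dilate $C'$, which finishes the proof.

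The main thing to be careful about is the regularity assumption on $C$ needed to make polarity well behaved: one wants $C$ to be closed, bounded, and to contain the origin in its interior so that $C^{\circ\circ} = C$ and so that the vertex--facet correspondence for $(S')^\circ$ holds cleanly. Central symmetry and (implicit) full-dimensionality in the statement deliver this; the degenerate lower-dimensional case is handled by first passing to the affine hull of $C$, which changes neither facet count nor the approximation factor.
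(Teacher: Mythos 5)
Your proof is correct and follows the same dualization strategy as the paper: apply the vertex-approximation theorem to a polar, take polars again, and use vertex--facet duality. The only cosmetic difference is where the $(1+\epsilon)$ dilation is inserted (you apply the theorem to $C^\circ$ and scale at the end; the paper applies it to $\tfrac{1}{1+\epsilon}C^\circ$ and needs no final scaling), and your extra remarks on regularity of $C$ are a reasonable clarification the paper leaves implicit.
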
	
		
		\begin{proof}
		Consider the (centrally symmetric) convex set $\frac{1}{1+\epsilon} C^\circ$; applying the above result, we get $S$ with $(3/\epsilon)^k$ vertices and $\frac{1}{1+\epsilon} C^\circ \subseteq S \subseteq C^\circ$. Taking polars (and noticing that $(\lambda A)^\circ = (1/\lambda) A^\circ$), we get $C \subseteq S^\circ \subseteq (1+\epsilon) C$ and $S^\circ$ has at most $(3/\epsilon)^k$ facets. This concludes the proof.
		\end{proof}

The key ideas used in our proof of Theorem \ref{thmbadset} is twofold (recall that $P = \overline{\mathcal{P}_{n/2,n}}$):
\begin{enumerate}
\item Roughly speaking, $(RP)^k$ is the intersection of (rotations of) \emph{$k$-dimensional} polytopes. This allow us to use Lemma \ref{lem:centralmod} above (with $n$ set to $k$) to get a good approximation $H$ of $(RP)^k$ using fewer than $\exp(\frac{n}{600^2})$ inequalities. 

%First use Lemma \ref{lem:centralmod} to construct a outer approximation $H$ of $R(P)$ which has $\approx 2^{n/360000}$ facets and $H \approx \left(R \overline{\mathcal{P}_{n/2,n}}      \right)^k$.

\item Then argue that $d(RP, (RP)^k) \approx d(RP, H) = \Omega( \sqrt{n})$ since $d(P, R^{-1}(H)) = \Omega( \sqrt{n})$ due to 
%the value of $k$, 
the number of facets of $H$ and Theorem \ref{thmbadset}.
\end{enumerate}

\begin{proof}[Proof of Theorem~\ref{thm:rotation}]
Note that it is sufficient to prove the result for $k = \frac{n}{200^3}$, which is what we do. To make the above ideas precise, observe that $(RP)^k = \bigcap_{K \in {[n] \choose k}} Q_K$, where $Q_K = RP + 0^{K}\times\R^{\bar{K}}$ (we use $\bar{K} := [n]\setminus K$). To approximate each $Q_K$, using Lemma~\ref{lem:centralmod}, let $h_K \subseteq \mathbb{R}^k$ be a polytope such that $\textup{proj}_{K}Q_K \subseteq h_K \subseteq (1+\epsilon) \textup{proj}_{K}Q_K$ and $h_K$ has at most $(3/\epsilon)^k$ facets. Let $H_k = h_K + 0^{K}\times\R^{\bar{K}}$; then $Q_K \subseteq H_K \subseteq (1+\epsilon) Q_K$ and $H_K$ has at most $(3/\epsilon)^k$ facets.

		Now notice that for convex sets $A, B$, we have $((1+\epsilon) A) \cap ((1+\epsilon) B) \subseteq (1+\epsilon) (A \cap B)$. This gives that if we look at the intersection $\bigcap_{K \in {[n] \choose k}} H_K$, we obtain
		\begin{align}
			(RP)^k &= \bigcap_{K \in {[n] \choose k}} Q_K \subseteq \bigcap_{K \in {[n] \choose k}} H_K \subseteq (1+\epsilon) \bigcap_{K \in {[n] \choose k}} Q_K \notag \\
		&= (1+\epsilon) (RP)^k.
		\label{eq}
		\end{align}
		
		 Notice $\bigcap_{K \in {[n] \choose k}} H_K$ has at most ${n \choose k} \left(\frac{3}{\epsilon}\right)^k \le \left(   \frac{en}{k}    \right)^k \left( \frac{3}{\epsilon}  \right)^k = \left(  \frac{3en}{k\epsilon}  \right)^k$ facets. Thus, setting $\epsilon = \frac{1}{10}$ we get 
\begin{eqnarray}
\left(  \frac{3en}{k\epsilon^{*}}  \right)^k &=& \left(  30 \cdot e\cdot  200^3  \right)^{\frac{n}{200^3}} \nonumber\\ 
&=& \left(\exp({ \log  (30 \cdot e \cdot 200^3)})\right)^{\frac{n}{200^3}} \nonumber \\
&=&\left(\exp \left(  \log  (30\cdot e \cdot 200^3) \cdot \frac{n}{200^3} \right)\right) \nonumber \\
& < & \label{eq10} \exp\left(\frac{n}{601^2} \right).
\end{eqnarray}
	Then define $H := \bigcap_{K \in {[n] \choose k}} H_K$, so that $(RP)^k \subseteq  H  \subseteq   (1+\epsilon) (RP)^k$. 

	In order to control the relationship between this multiplicative approximation and the distance $d(.,.)$, we introduce the set $C = R([-1,1]^n)$. Notice that by construction  $RP \subseteq H\cap C $.

\begin{claim}
	$d(RP, H \cap C) \geq \frac{1}{6}\sqrt{n}$
\end{claim}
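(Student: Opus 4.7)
The plan is to pull everything back to the unrotated space via the isometry $R^{-1}$ and invoke Theorem~\ref{thmbadset}. Since rotations preserve Euclidean distance, $d(RP, H \cap C) = d(P, R^{-1}(H) \cap [-1,1]^n)$, where I used that $R^{-1}(C) = [-1,1]^n$. The strategy from there is to package the facets of $R^{-1}(H)$ as a set $\mathcal{D}$ of valid dense inequalities for $P$ and apply Theorem~\ref{thmbadset}.

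First I would let $\mathcal{D}$ be the set of facet-defining inequalities of $R^{-1}(H)$. Each is valid for $P$, because $RP \subseteq H$ implies $P \subseteq R^{-1}(H)$. Rotations do not change facet counts, so $|\mathcal{D}|$ is bounded by the facet count of $H$, which by \eqref{eq10} is at most $\exp(n/601^2) \le \exp(n/600^2)$. Thus $\mathcal{D}$ meets the hypotheses of Theorem~\ref{thmbadset}.

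Second I would verify the key inclusion $P^{k,\mathcal{D}} \subseteq R^{-1}(H) \cap [-1,1]^n$. By the definition in \eqref{pkc}, $P^{k,\mathcal{D}} = P^k \cap R^{-1}(H)$, so it suffices to check $P^k \subseteq [-1,1]^n$. This holds because $P \subseteq [-1,1]^n$ and the $2n$ box inequalities $\pm x_i \le 1$ are $1$-sparse (hence $k$-sparse) valid cuts for $P$, so they are included in the intersection defining $P^k$. Since both $P^{k,\mathcal{D}}$ and $R^{-1}(H) \cap [-1,1]^n$ sandwich $P$ from outside, monotonicity of the Hausdorff distance in the outer set gives $d(P, R^{-1}(H) \cap [-1,1]^n) \ge d(P, P^{k,\mathcal{D}})$.

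Finally, since $k = n/200^3 \le n/100$, Theorem~\ref{thmbadset} applies to $P^{k,\mathcal{D}}$ and yields $d(P, P^{k,\mathcal{D}}) \ge \tfrac{1}{6}\sqrt{n}$, which chained with the preceding inequalities proves the claim. The main conceptual move is recognizing that sparse cuts for the rotated polytope $RP$ correspond, after unrotating, to an outer approximation of $P$ by few dense cuts together with the coordinate box, which is exactly the situation controlled by Theorem~\ref{thmbadset}; the only mild subtlety is noting that the box inequalities are already absorbed into $P^k$, so no extra budget is spent on them in $\mathcal{D}$.
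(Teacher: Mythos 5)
Your proof is correct and follows essentially the same approach as the paper: rotate back via $R^{-1}$, bound the facet count of the preimage, and invoke Theorem~\ref{thmbadset}. The one cosmetic difference is that the paper argues by contradiction and simply adds the $2n$ box facets of $C$ into the inequality budget (using $\exp(n/601^2) + 2n \le \exp(n/600^2)$), whereas you observe that the box inequalities are $1$-sparse and hence already absorbed into $P^k$, so they cost nothing from the $\mathcal{D}$-budget; both counts fit under $\exp(n/600^2)$, so the conclusion is the same.
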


\begin{proof}
\let\qed\relax
	Assume by contradiction that $d(RP, H \cap C) < \frac{1}{6}\sqrt{n}$. Then since distances between points and number of facets of a polytope are invariant under rotation, we obtain that  $d(P, R^{-1} (H \cap C)) < \frac{1}{6}\sqrt{n}$ where $R^{-1} (H \cap C)$ is defined using at most $\exp({n/(600)^2})$ inequalities (because $C$ has $2n$ facets, using (\ref{eq10}) $H$ has at most  $\exp({n/ (601)^2})$  and for sufficiently large $n$, $\exp({n/ (601)^2})+ 2n \leq \exp({n/ (600)^2})$). However notice that this contradicts the result of Theorem \ref{thmbadset}, since $k = \frac{\sqrt{n}}{100} \leq \frac{n}{100}$ and $ R^{-1} (H \cap C) $ is defined using at most $2^{n/(600)^2}$ inequalities.  \hfill $\diamond$
\end{proof}

But from (\ref{eq}) we have $(1+\epsilon) (RP)^k \cap C$ contains $H \cap C$, and hence
\begin{eqnarray}
\label{eq11} d(RP, (1+\epsilon) (RP)^k \cap C) \geq \frac{1}{6}\sqrt{n}.
\end{eqnarray}

\begin{claim}
	$d(RP, (RP)^k \cap C) \ge d(RP, (1+\epsilon) (RP)^k \cap C) - \epsilon \sqrt{n}$
\end{claim}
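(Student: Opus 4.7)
The plan is to show that any point $y \in (1+\epsilon)(RP)^k \cap C$ admits a nearby point $z \in (RP)^k \cap C$, with $\|y - z\| \le \epsilon \sqrt{n}$, and then invoke the triangle inequality inside the definition of Hausdorff distance.

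More concretely, given $y \in (1+\epsilon)(RP)^k \cap C$, define $z := \frac{1}{1+\epsilon} y$. Then $z \in (RP)^k$ by construction. To see $z \in C$, note that $C = R([-1,1]^n)$ is a rotated cube, hence convex and contains the origin, so $z = \frac{1}{1+\epsilon} y + \frac{\epsilon}{1+\epsilon} \cdot 0$ lies in $C$ by convexity. Moreover,
\begin{align*}
\|y - z\| \;=\; \Big\| y - \tfrac{1}{1+\epsilon} y \Big\| \;=\; \tfrac{\epsilon}{1+\epsilon}\|y\| \;\le\; \epsilon \|y\| \;\le\; \epsilon \sqrt{n},
\end{align*}
where the final inequality uses the fact that $y \in R([-1,1]^n)$ and rotations preserve Euclidean norms, so $\|y\| \le \sqrt{n}$.

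With this, for any $y \in (1+\epsilon)(RP)^k \cap C$ and any $x \in RP$, the triangle inequality gives $\|x - y\| \le \|x - z\| + \epsilon \sqrt{n}$, and minimizing over $x$ yields
\begin{align*}
\min_{x \in RP} \|x - y\| \;\le\; \min_{x \in RP} \|x - z\| + \epsilon \sqrt{n} \;\le\; d(RP, (RP)^k \cap C) + \epsilon \sqrt{n}.
\end{align*}
Taking the maximum over $y \in (1+\epsilon)(RP)^k \cap C$ (which is the definition of $d(RP, (1+\epsilon)(RP)^k \cap C)$, since $RP \subseteq (1+\epsilon)(RP)^k \cap C$ so this set is nonempty) and rearranging gives the claim.

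There isn't really a substantial obstacle here: the only subtlety is checking that the shrunk point $z$ remains inside $C$, which is where the choice to intersect with the rotated box $C$ pays off (it is centrally symmetric and convex, so closed under scaling toward the origin), and bounding $\|y\|$ uniformly on $C$ by $\sqrt{n}$, which is immediate from rotation-invariance of the norm.
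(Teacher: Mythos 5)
Your proof is correct and takes essentially the same approach as the paper: shrink a point of $(1+\epsilon)(RP)^k \cap C$ by $\frac{1}{1+\epsilon}$ to land in $(RP)^k \cap C$, and bound the displacement by $\epsilon\sqrt{n}$ using $\|y\|\le\sqrt{n}$ for $y\in C$. The only cosmetic difference is that you reason for arbitrary $y$ and take the maximum at the end, while the paper fixes the maximizing pair $(\bar x,\bar y)$ up front; you also make the (correct, and worth having) observation that $z\in C$ follows from convexity of $C$ and $0\in C$, which the paper leaves implicit.
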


\begin{proof}
\let\qed\relax
			Take $\bar{x} \in (1+\epsilon) (RP)^k \cap C$ and $\bar{y} \in RP$ that achieve $d(\bar{x}, \bar{y}) = d((1+\epsilon) (RP)^k \cap C, RP)$. Look at the point $\frac{1}{1+\epsilon} \bar{x}$ and notice it belongs to $(RP)^k \cap C$; let $\tilde{y}$ be the point in $RP$ closest to $\frac{1}{1+\epsilon} \bar{x}$. Then since $\bar{y}$ is the point in $RP$ closest to $\bar{x}$,
			\begin{align*}
				d(RP, (1+\epsilon) (RP)^k \cap C) = d(\bar{x}, \bar{y}) \le d(\bar{x}, \tilde{y}).
			\end{align*}
		By triangle inequality, $d(\bar{x}, \tilde{y}) \le d(\frac{1}{1+\epsilon} \bar{x}, \tilde{y}) + d(\frac{1}{1+\epsilon} \bar{x}, \bar{x}) \le d(RP, (RP)^k \cap C) + d(\frac{1}{1+\epsilon} \bar{x}, \bar{x})$. To bound $d(\frac{1}{1+\epsilon} \bar{x}, \bar{x})$, notice it is equal to $\frac{\epsilon}{1+\epsilon} \|\bar{x}\|$; since $\bar{x}$ belongs to $C$, we can upper bound $\|\bar{x}\| \le \sqrt{n}$ (this is why we introduced the set $C$ in the argument). Putting these bounds together we obtain the result. \hfill $\diamond$
\end{proof}

Using (\ref{eq11}) and Claim 2 we obtain that
$d(RP, (RP)^k) \geq d(RP, (RP)^k \cap C ) \ge d(RP, (1+\epsilon) (RP)^k \cap C) - \epsilon \sqrt{n} \geq (\frac{1}{6} - \frac{1}{10})\sqrt{n}.$ This concludes the proof of the theorem.
		\end{proof}

%Later we will set $\epsilon$ to be a constant, and by using our definition of $k$ we should get that this number of facets is at most $2^{n/100}$. Thus, by our result in the short write up, $d(H, RP) \gtrsim \sqrt{n}$ (because otherwise $d(R^{-1}H, P) \lesssim \sqrt{n}$ so we would be able to approximate $P$ with $2^{n/100}$ inequalities, a contradiction). Actually it is more convenient to work with the following (because it better controls the diameter): let $C = R([-1,1]^n)$. Notice that $H\cap C $ contains $RP$ and that $d(H \cap C, RP) =\Omega( \sqrt{n})$ (again because of too few facets). But from (\ref{eq}) we have $(1+\epsilon) (RP)^k \cap C$ contains $H \cap C$, and hence $d((1+\epsilon) (RP)^k \cap C, RP) =\Omega( \sqrt{n})$.
		
%		Now we show that $d((RP)^k \cap C, RP)$ is also roughly $\sqrt{n}$, which then implies $d((RP)^k, RP) =\Omega( \sqrt{n})$ and concludes the result.

%%%%%%%%%%%%%%%%%%%%%%%%%%%%%%%%%%%%%%%%%%%%%%%%%%%%%%%%%%%%%%%%%%%%%%%%%%%%%%%%%%%%%%%%%%%%%%%%%%%%%%%%%%%%%%%%%%%%%%%%%%%%%%%%%%%%%%%%%
%%%%%%%%%%%%%%%%%%%%%%%%%%%%%%%%%%%%%%%%%%%%%%%%%%%%%%%%%%%%%%%%%%%%%%%%%%%%%%%%%%%%%%%%%%%%%%%%%%%%%%%%%%%%%%%%%%%%%%%%%%%%%%%%%%%%%%%%%

%@article{nelson1980folded,
%  title={THE FOLDED NORMAL-DISTRIBUTION},
%  author={Nelson, LS},
%  journal={Journal of Quality Technology},
%  volume={12},
%  number={4},
%  pages={236--238},
%  year={1980},
%  publisher={AMER SOC QUALITY CONTROL-ASQC ASQC MEMBERSHIP MANAGER 611 E. WISCONSIN AVENUE, MILWAUKEE, WI 53202}
%}

\section{Lower bounds on approximation along most directions}\label{sec:alldirection}

	We now prove Theorem \ref{thm:directional}. The main tool we use in this section is concentration of Lipschitz functions on Gaussian spaces.
	
\begin{thm}[Inequality (1.6) of \cite{ledouxTalagrand}] \label{thm:concGauss}
	Let $G_1,G_2,\ldots,G_n$ be independent standard Gaussian random variables, and let $f:\R^n \rightarrow \R$ be an L-Lipschitz function, namely for all $x, x' \in \R^n$, $|f(x) - f(x')| \le L \cdot \|x - x'\|$. Then letting $Z=f(G_1,G_2,\ldots,G_n)$, for $t>0$ we have
	\begin{align*}
	\Pr\left( |Z - \E(Z)|   \geq t \right) &\le 2 \exp\left( -\frac{t^2}{2 L^2} \right) \\
	\end{align*}
\end{thm}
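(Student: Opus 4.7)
The plan is to write the gap explicitly and then apply Gaussian concentration. First I would use Lemma~\ref{tildepk}: every $k$-sparse defining inequality of $P=\overline{\mathcal{P}_{n/10,n}}$ has the form $\sum_{i\in I^+} x_i-\sum_{i\in I^-} x_i\le n/10$ with $|I^+|+|I^-|=k\le n/10$, and such an inequality is automatically satisfied on the whole box $[-1,1]^n$. Hence $P^k=[-1,1]^n$, and $\max_{x\in P^k} Cx=\|C\|_1$. By the sign-symmetry of $P$, maximizing $Cx$ over $P$ is equivalent to maximizing $|C|\cdot y$ over $\mathcal{P}_{n/10,n}=\{y\in[0,1]^n:\sum y_i\le n/10\}$; since $n/10$ is an integer, this LP is solved by setting the $n/10$ coordinates with largest $|c_i|$ to $1$, giving the sum $T(C)$ of the top $n/10$ values among $|c_1|,\ldots,|c_n|$. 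Therefore $gap^k_P(C)=\|C\|_1-T(C)=:F(C)$, namely the sum of the \emph{smallest} $9n/10$ values of $|c_i|$.

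Next I would pass to Gaussian space. If $G$ is a standard Gaussian vector in $\R^n$, then $C=G/\|G\|$ is uniform on the sphere, and by positive homogeneity $gap^k_P(C)=F(G)/\|G\|$. The denominator concentrates: $g\mapsto\|g\|$ is $1$-Lipschitz and $\mathbb{E}\|G\|\le\sqrt{\mathbb{E}\|G\|^2}=\sqrt{n}$ by Jensen, so Theorem~\ref{thm:concGauss} with $t=\sqrt{2\log n}$ gives $\|G\|\le\sqrt{n}+\sqrt{2\log n}$ with probability at least $1-2/n$. For the numerator, I would use the representation $F(g)=\min_{S\in\binom{[n]}{9n/10}}\sum_{i\in S}|g_i|$; each function $g\mapsto\sum_{i\in S}|g_i|$ is $\sqrt{|S|}=\sqrt{9n/10}$-Lipschitz by Cauchy--Schwarz, and $F$ inherits the same constant as a minimum of Lipschitz functions. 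Theorem~\ref{thm:concGauss} with $t=3\sqrt{n\log n/5}$ then yields $F(G)\ge\mathbb{E} F(G)-O(\sqrt{n\log n})$ with probability at least $1-2/n$.

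Finally I would bound $\mathbb{E} F(G)$ from below. Linearity gives $\mathbb{E}\|G\|_1=n\sqrt{2/\pi}$, while Cauchy--Schwarz yields $T(g)\le\sqrt{n/10}\,\|g\|$, so $\mathbb{E} T(G)\le\sqrt{n/10}\cdot\sqrt{n}=n/\sqrt{10}$. Hence $\mathbb{E} F(G)\ge n(\sqrt{2/\pi}-1/\sqrt{10})\ge n/5$. A union bound on the two concentration events yields, with probability at least $1-4/n$,
\[
 \frac{F(G)}{\|G\|}\ \ge\ \frac{n/5-O(\sqrt{n\log n})}{\sqrt{n}+\sqrt{2\log n}}\ \ge\ \frac{\sqrt{n}}{20},
\]
where the last inequality uses the hypothesis $n\ge 1000$ to absorb the lower-order terms.

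The main obstacle is bookkeeping rather than ideas: Theorem~\ref{thm:directional} asks for the specific threshold $\sqrt{n}/20$ at failure probability $4/n$, so I must tune the deviation parameter $t$ in each application of Theorem~\ref{thm:concGauss} so that the tails are exactly $2/n$, and then check that the comfortable slack between $\mathbb{E} F(G)/\mathbb{E}\|G\|\approx\sqrt{n}/5$ and the target $\sqrt{n}/20$ dominates the $O(\sqrt{\log n})$ corrections for every $n\ge 1000$ (a short numerical check against $n/\log n\ge 144$). Beyond that, the argument is a routine application of Lipschitz concentration once the gap has been identified as a concrete Lipschitz function of a standard Gaussian.
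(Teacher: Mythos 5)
The statement you were asked to prove is Theorem~\ref{thm:concGauss} itself --- the Gaussian concentration inequality for Lipschitz functions, inequality (1.6) of \cite{ledouxTalagrand} --- not Theorem~\ref{thm:directional}. Your proposal never addresses it: you invoke Theorem~\ref{thm:concGauss} three separate times as a black box (for $\|G\|$, for $F(G)$, and implicitly in the tail bounds) and use it to derive the directional-gap bound for $P=\overline{\mathcal{P}_{n/10,n}}$. As a proof of the stated theorem this is circular. The actual content of the statement is classical Gaussian concentration, whose standard proofs go through the Gaussian isoperimetric inequality of Borell and Sudakov--Tsirelson, or the Pisier--Maurey interpolation/smart-path argument, or a log-Sobolev/semigroup derivation; none of these ideas appear in your write-up. (The paper itself does not prove this theorem either --- it imports it from \cite{ledouxTalagrand} --- so the expected answer is a citation or a self-contained proof of that classical fact, not an application of it.)

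For what it is worth, what you did write is an essentially correct and genuinely different proof of Theorem~\ref{thm:directional}: you identify $gap^k_P(C)$ exactly as the sum of the smallest $9n/10$ entries of $|C|$ and concentrate that single Lipschitz function (a minimum of $\sqrt{9n/10}$-Lipschitz functions is again $\sqrt{9n/10}$-Lipschitz), bounding its mean from below via $\E\|G\|_1=n\sqrt{2/\pi}$ and the Cauchy--Schwarz estimate $\E\,T(G)\le n/\sqrt{10}$. The paper instead lower-bounds $\max_{x\in P^k}Gx=\|G\|_1\ge 0.7n$ and upper-bounds $\max_{x\in P}Gx\le 0.6n$ by a union bound over the roughly $\tfrac{2}{n}e^{0.6n}$ extreme points of $P$; your route avoids that union bound and is arguably cleaner. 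But it is an answer to a different question than the one posed.
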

	
To prove Theorem \ref{thm:directional}, recall that $P = \overline{\mathcal{P}_{n/10,n}}$. Let $G = (G_1, G_2, \ldots, G_n)$ be a random vector whose components are independent standard Gaussians. It is well-known that $\frac{G}{\|G\|_2}$ is uniformly distributed in the sphere (see for instance \cite{ledouxTalagrand}, page 55). Notice that $gap^k_P(\cdot)$ is positive homogeneous, so $gap^k_P\left(\frac{G}{\|G\|}\right) = \frac{1}{\|G\|} \cdot gap_P^k(G)$.
	
Our first step is to lower bound $gap_P^k(G)$ with high probability, starting by lower bounding the maximization of $G$ over $P^k$.

\begin{claim}
	With probability at least $1 - \frac{1}{n}$, $\max_{x \in P^k} G x \ge 0.7n$.
\end{claim}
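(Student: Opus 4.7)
The plan is to observe that $P^k$ actually contains the entire discrete hypercube $\{-1,1\}^n$, so $\max_{x \in P^k} Gx$ is at least $\max_{x \in \{-1,1\}^n} Gx = \|G\|_1$, and then to lower bound $\|G\|_1$ with high probability using Gaussian Lipschitz concentration (Theorem~\ref{thm:concGauss}).

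First I would verify the inclusion $\{-1,1\}^n \subseteq P^k$. By the external description in Lemma~\ref{tildepk}, membership in $\overline{\mathcal{P}_{n/10,n}}^k$ is characterized by the box constraints $x \in [-1,1]^n$ together with $\sum_{i \in I^+} x_i - \sum_{i \in I^-} x_i \le n/10$ for every $k$-subset $I$ and every partition $I = I^+ \cup I^-$. For $x \in \{-1,1\}^n$ the left-hand side is at most $|I^+| + |I^-| = k \le n/10$, so all constraints are satisfied. Plugging in $x_i = \operatorname{sign}(G_i)$ then gives $\max_{x \in P^k} Gx \ge \sum_i |G_i| = \|G\|_1$.

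Next I would apply Theorem~\ref{thm:concGauss} to the function $f(g) = \|g\|_1$. Cauchy--Schwarz yields $|f(g) - f(g')| \le \|g - g'\|_1 \le \sqrt{n}\,\|g - g'\|_2$, so $f$ is $\sqrt{n}$-Lipschitz in the Euclidean norm. Its expectation is $\mathbb{E}\|G\|_1 = n\sqrt{2/\pi} > 0.79\, n$. Invoking the concentration inequality with $L = \sqrt{n}$ and deviation $t = 0.09\, n$ gives
$$\Pr(\|G\|_1 < 0.7\, n) \le \Pr\bigl(|\|G\|_1 - \mathbb{E}\|G\|_1| \ge 0.09\, n\bigr) \le 2\exp\!\left(-\frac{(0.09\, n)^2}{2 n}\right),$$
which decays exponentially in $n$ and, in the regime $n \ge 1000$, can be driven below $1/n$.

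The one delicate point, and the only real obstacle, is numerical: one needs the gap between $\mathbb{E}\|G\|_1 \approx 0.798\, n$ and the target value $0.7\, n$ to be large enough, relative to the Lipschitz constant $\sqrt{n}$, that the Gaussian concentration tail comfortably falls below $1/n$ throughout the advertised range $n \ge 1000$. Conceptually the argument is just a direct combination of the combinatorial observation $\{-1,1\}^n \subseteq P^k$, the identity $\max_{x \in \{-1,1\}^n} Gx = \|G\|_1$, and standard Gaussian concentration for a Lipschitz function.
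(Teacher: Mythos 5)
Your approach is essentially the paper's: both reduce to showing $\max_{x\in P^k} Gx \ge \|G\|_1$ (the paper in fact observes $P^k = [-1,1]^n$ exactly, via the same Lemma~\ref{tildepk} computation you perform; your inclusion $\{-1,1\}^n \subseteq P^k$ is a harmless weakening since only a lower bound is needed), and both then apply Theorem~\ref{thm:concGauss} to the $\sqrt{n}$-Lipschitz function $\|\cdot\|_1$ with mean $n\sqrt{2/\pi} > 0.79n$. The numerical step you flag as the ``one delicate point'' is indeed a weak spot, and it is equally weak in the paper: with the stated constants, $2\exp(-0.09^2 n/2) \approx 0.035$ at $n=1000$, which exceeds $1/n = 0.001$, so the bound as written only clears $1/n$ once $n$ is roughly in the low thousands (or one uses a deviation sharper than $0.09n$, since $\sqrt{2/\pi}\approx 0.798$). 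This is a shared slip in the constants rather than a gap in your argument relative to the paper's.
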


\begin{proof}
\let\qed\relax
		Since $k = \frac{n}{10}$, we have that $P^k = \left[-1,1\right]^n$ (Proposition \ref{tildepk}).
It then follows that
		\begin{align}
			\max_{x\in P^k} Gx = \sum_{i=1}^n |G_i|. \label{eq:lbGap1}
		\end{align}
		The random variables $|G_i|$ have \emph{folded normal} distribution \cite{foldednormal}, for which is known that $\E[|G_i|] = \sqrt{2/\pi} \ge 0.79$. Since the function $(x_1, \ldots, x_n) \mapsto \sum_{i=1}^n |x_i|$ is $\sqrt{n}$-Lipschitz, we can use Theorem \ref{thm:concGauss} to obtain the bound $$\Pr\left(\sum_{i=1}^n |G_i| < 0.7n\right) \le 2 \exp\left(-\frac{0.09^2 n}{2}\right) \le  \frac{1}{n},$$ where the last inequality holds if $n \ge 1000$. Equation \eqref{eq:lbGap1} then concludes the proof. \hfill $\diamond$
\end{proof}

	Next we upper bound the maximization of $G$ over $P$.

\begin{claim}	
	With probability at least $1 - \frac{2}{n}$, $\max_{x \in P} Gx \le 0.6n$.
\end{claim}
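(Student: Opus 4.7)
The plan is to reduce $\max_{x \in P} G x$ to an order-statistic expression, show it is a Lipschitz function of $G$ with a small Lipschitz constant, and then apply the Gaussian Lipschitz concentration (Theorem~\ref{thm:concGauss}). First I would identify $\max_{x \in P} G x$ explicitly. Since the vertices of $\mathcal{P}_{n/10,n}$ are the $\{0,1\}$-vectors with at most $n/10$ ones, the vertices of $P = \overline{\mathcal{P}_{n/10,n}}$ are the $\{-1,0,1\}$-vectors supported on at most $n/10$ coordinates. Maximizing $G x$ over $P$ therefore reduces to selecting the indices $T(G) \subseteq [n]$ of the $n/10$ largest $|G_i|$'s and setting $x_i = \operatorname{sign}(G_i)$ on them, giving
\begin{align*}
f(G) \,:=\, \max_{x \in P} G x \,=\, \sum_{i \in T(G)} |G_i|.
\end{align*}

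Next I would show that $f$ is $\sqrt{n/10}$-Lipschitz with mean at most $n/\sqrt{10}$. For any $g, g' \in \R^n$, the set $T(g)$ is feasible but not necessarily optimal for $g'$, so $f(g') \ge \sum_{i \in T(g)} |g'_i|$. Hence
\begin{align*}
f(g) - f(g') \,\le\, \sum_{i \in T(g)} \bigl(|g_i| - |g'_i|\bigr) \,\le\, \sum_{i \in T(g)} |g_i - g'_i| \,\le\, \sqrt{n/10}\,\|g - g'\|_2
\end{align*}
by Cauchy--Schwarz and $|T(g)| = n/10$; symmetry then yields the Lipschitz bound. Applying Cauchy--Schwarz once more gives $\sum_{i \in T(G)} |G_i| \le \sqrt{n/10}\,\|G\|_2$, and Jensen's inequality gives $\E\|G\|_2 \le \sqrt{\E\|G\|_2^2} = \sqrt{n}$, so $\E[f(G)] \le n/\sqrt{10} < 0.32\, n$.

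Finally, applying Theorem~\ref{thm:concGauss} to $f$ with $L = \sqrt{n/10}$ and $t := 0.6 n - \E[f(G)] \ge 0.28\, n$ gives
\begin{align*}
\Pr\bigl(f(G) \ge 0.6\, n\bigr) \,\le\, 2 \exp\!\left(-\frac{(0.28\, n)^2}{2 \cdot n/10}\right) \,=\, 2\exp(-0.392\, n),
\end{align*}
which is vastly smaller than $2/n$ once $n \ge 1000$. The main obstacle is pinning down the correct Lipschitz constant for $f$: the key observation is that the sparsity constraint in $\mathcal{P}_{n/10,n}$ forces every maximizer to have support size $n/10$ and hence $\ell_2$-norm at most $\sqrt{n/10}$, which controls both the expectation bound and the concentration radius at the same scale $n/\sqrt{10}$, leaving a constant-fraction slack below $0.6 n$ that Gaussian concentration converts to exponentially small failure probability.
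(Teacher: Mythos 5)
Your proof is correct and establishes the claim via a genuinely different route than the paper. The paper's argument first bounds $\Pr(Gv > 0.6n) \le 2e^{-0.6n}$ for each \emph{fixed} extreme point $v$ (a single linear function of $G$, hence $\sqrt{n/10}$-Lipschitz), and then takes a union bound over all $\approx \frac{2}{n}e^{0.6n}$ extreme points of $P$, which consumes nearly all of the exponential decay and leaves the stated $\frac{2}{n}$ failure probability. You instead observe that $f(G) := \max_{x\in P} Gx$ is itself $\sqrt{n/10}$-Lipschitz (since every maximizer has $\ell_2$-norm at most $\sqrt{n/10}$, or equivalently via the order-statistics identity $f(G)=\sum_{i\in T(G)}|G_i|$), bound $\E[f(G)] \le n/\sqrt{10} < 0.32n$ by Cauchy--Schwarz and Jensen, and apply Theorem~\ref{thm:concGauss} once to the max itself. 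This avoids the union bound entirely and yields an exponentially small failure probability $2e^{-0.392n}$ rather than the polynomial $2/n$ of the claim, so your argument is in fact stronger. The tradeoff is that the paper's vertex-enumeration-plus-union-bound approach is perhaps more elementary (no need to argue Lipschitzness of a pointwise maximum), while yours is more structural and tighter; both are valid and the key quantitative ingredient --- the Lipschitz constant $\sqrt{n/10}$ forced by the sparsity of $\P_{n/10,n}$ --- is the same in both.
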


\begin{proof}
\let\qed\relax
 Letting $ext(P)$ denote the set of extreme points of $P$, notice that $\max_{x \in P} Gx = \max_{v \in ext(P)} Gv$, so it suffices to upper bound the latter. Also notice that the extreme points of $P$ are exactly the points in $\{-1,0,1\}^n$ with at most $\frac{n}{10}$ non-zero entries (Proposition \ref{tildepk}).
		
		Consider $v \in ext(P)$; we verify that $Gv \le 0.6n$ with probability at least $1 - 2 e^{-0.6 n}$. One way of seeing this, is by noticing that since $v$ has at most $\frac{n}{10}$ non-zero entries, $Gv = \sum_{i : v_i = 1} G_i + \sum_{i : v_i = -1} -G_i$ is a function of $G$ that has at most $\frac{n}{10}$ terms and is $\sqrt{\frac{n}{10}}$-Lipschitz, so Theorem \ref{thm:concGauss} gives
		\begin{align}
			\Pr\left(Gv > 0.6n\right) = \Pr\left(Gv - \E[Gv] > 0.6n\right) \le 2 e^{-0.6 n}, \label{eq:ubGap1}
		\end{align}
		and the result follows. (Another way to see this is to use that fact that $Gv$ is a centered Gaussian with variance at most $\frac{n}{10}$ and use a tail bound for the latter.)
		
		Now notice that $P$ has $\sum_{i=1}^{n/10} {n \choose i} 2^i \le \frac{n}{10} {n \choose n/10} 2^{n/10}$ extreme points. Since ${n \choose t} \le (\frac{e n}{t})^t$ for all $0 < t <n$, the number of extreme points of $P$ can be upper bounded by $$\exp\left(\ln\left(\frac{n}{10}\right) + \frac{n}{10} (\ln 10e + \ln 2)\right) \le \frac{2}{n} e^{0.6n},$$ where the last inequality uses $n \ge 30$.
		
		Then taking a union bound of \eqref{eq:ubGap1} over all extreme points of $P$ gives that with probability at least $1 - \frac{2}{n}$ for all $v \in ext(P)$ we have $Gv \le 0.6n$. This concludes the proof. \hfill $\diamond$
\end{proof}
	
	Finally, standard results give that $\|G\|_2 \le 2 \sqrt{n}$ with probability at least $1 - 2 e^{-0.5 n}$ (for instance, notice by Jensen's inequality $\E[\|G\|]^2 \le \E[\|G\|^2] = n$ and apply Theorem \ref{thm:concGauss} to $\|G\|$). Using the fact $n \ge 30$, we then get $\Pr(\|G\| \le 2 \sqrt{n}) \ge 1 - \frac{1}{n}$. Then taking a union bound over this event and the events $\max_{x \in P^k} G x \ge 0.7n$ and $\max_{x \in P} Gx \le 0.6n$ gives that with probability at least $1 - \frac{4}{n}$ we have $gap_P^k\big(\frac{G}{\|G\|}\big) = \frac{1}{\|G\|} \cdot gap_P^k(G) \ge \frac{\sqrt{n}}{20}$. This concludes the proof of Theorem \ref{thm:directional}.

\section*{Acknowledgments}
We thank Jon Lee for motivating some of the questions considered in this paper. Santanu S. Dey gratefully acknowledges the support by NSF under grant CCF-1415460.

\ifORL
	\section*{References}

	\bibliographystyle{elsarticle-num}
\else
	\bibliographystyle{plain}
\fi
\bibliography{sparsebib}

\appendix
\section*{Appendix A}
%\begin{lemma}
%		For every polytope $P \subseteq \R^n$, $d(P, P^k) = \max_{c:||c||=1} gap^k_P(c)$.
%	\end{lemma}
	
\begin{proof}[Proof of Lemma \ref{lemma:distGap}]
		It is not difficult to see that for $P^k=P$, the lemma holds, since $d(P, P^k) = gap^k_P(c) = 0\ \forall c:||c||=1$. When, $P^k \neq P$, we have that $d(P, P^k)=d(x^0,y^0)>0$ is attained at $x^0\in ext(P^k)$ and $y^0\in P$, the orthogonal projection of $x^0$ onto $P$ (see \cite{deygood}). Thus, $y^0\in F=\left\{z\in \R^n:az=b\right\}\cap P$, a face of $P$ such that $a=(x^0-y^0)$, $b=(x^0-y^0)y^0$ and $P\subseteq \left\{z\in \R^n:az\le b\right\}$. Let $c=(x^0-y^0)/||x^0-y^0||$, we have: $\max_{x\in P} cx= cy^0$. On the other hand, $\max_{z \in P^k}cz=cx^0$, since otherwise, if $\exists \bar{x}\in P^k$ with $c\bar{x}>cx^0$, let $\bar{y}$ denote the orthogonal projection of $\bar{x}$ onto $\left\{z\in \R^n:az=b\right\}$. Then, for all $z\in P$ we have $d(\bar{x},z)\ge d(\bar{x},\bar{y})> d(x^0,y^0)$ (the last inequality follows from the fact that $c \bar{x} > c x^{0}$, $c\bar{y} = c y^0$ and $\bar{x} - \bar{y}/||\bar{x}-\bar{y}|| = c$), a contradiction. So, we obtain
\begin{align*}
d(P, P^k) &= ||x^0-y^0|| = c(x^0-y^0) \\
& = \max_{x\in P^k} cx -\max_{x\in P} cx= gap_P^k(c).
\end{align*}
Now, assume by contradiction that $\exists c'$ s.t. $gap_P^k(c')>gap_P^k(c)$ and $||c'||=1$. Let $x'\in P^k,y'\in P$ denote the points at which $gap_P^k(c')$ is attained. Using the definition of $d(P, P^k) $ and the relation between $c$ and $c'$ 
\begin{align*}
d(P, P^k) &\ge ||x'-y'|| = \frac{(x'-y')}{||x'-y'||}(x'-y') \\
& = \max_{c: ||c||=1} c(x'-y') \ge c'(x'-y')\\
&= gap^k_P(c') > gap^k_P(c) = d(P, P^k),
\end{align*}
a contradiction. Thus, we must have $d(P, P^k) = \max_{c:||c||=1} gap^k_P(c)$.
\end{proof}

\section*{Appendix B}

A polytope $P\subseteq \mathbb{R}^n_+$ is called \emph{down-monotone} if whenever $x\in P$ and $0\le y\le x$, we have $y\in P$. We begin with some preliminary results about the symmetrization we employ. 

\begin{lemma} \label{prelemma}
For a down-monotone polytope $P\subseteq \R_{+}^n$ we have $\overline{P}=\bigcup_{I\subseteq\left[n\right]} P^I$. 
\end{lemma}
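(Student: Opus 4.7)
The plan is to prove the nontrivial inclusion $\overline{P} \subseteq \bigcup_{I\subseteq [n]} P^I$ by showing the right-hand side is already convex, so that taking the convex hull as in the definition of $\overline{P}$ adds nothing. The inclusion $\overline{P} \supseteq \bigcup_I P^I$ is immediate from the definition $\overline{P} = \textup{conv}(\bigcup_I P^I)$.

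For the reverse inclusion, take an arbitrary $z \in \overline{P}$ and write it as a finite convex combination $z = \sum_j \lambda_j x_j$, where each $x_j \in P^{I_j}$ for some $I_j \subseteq [n]$ and $\sum_j \lambda_j = 1$, $\lambda_j \ge 0$. I would then define $K := \{ i \in [n] : z_i \ge 0 \}$ and claim that $z \in P^K$. Unpacking the definition of $P^K$, this reduces to verifying that $z^K \in P$; and since $K$ collects the non-negative coordinates of $z$, the vector $z^K$ is simply $|z|$, the coordinate-wise absolute value of $z$.

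The key step is therefore to show $|z| \in P$. Apply the componentwise triangle inequality:
\[
|z_i| \;=\; \Bigl|\sum_j \lambda_j (x_j)_i\Bigr| \;\le\; \sum_j \lambda_j |(x_j)_i|.
\]
By the definition of $P^{I_j}$, for each $j$ the point $|x_j|$ equals $(x_j)^{I_j}$ with signs flipped back, and hence lies in $P$. Convexity of $P$ then gives $\sum_j \lambda_j |x_j| \in P$. Combining this with the coordinate-wise bound $0 \le |z| \le \sum_j \lambda_j |x_j|$ and invoking the down-monotonicity hypothesis yields $|z| \in P$, hence $z \in P^K \subseteq \bigcup_I P^I$, completing the argument.

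There is no real obstacle here; the only thing requiring care is bookkeeping the sign-flipping operation to confirm that $z^K = |z|$ for the choice $K = \{i : z_i \ge 0\}$, and to confirm that $x_j \in P^{I_j}$ indeed implies $|x_j| \in P$ (which follows because $x_j = y^{I_j}$ for some $y \in P$, and $|x_j| = y$). Both hypotheses on $P$ (convexity and down-monotonicity) are used exactly once and in an essential way, and the argument breaks without either.
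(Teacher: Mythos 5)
Your proof is correct and follows essentially the same route as the paper: pick the sign set $K$ of the convex combination, observe that $z^K = |z|$ is coordinate-wise dominated by a convex combination of points of $P$, and invoke down-monotonicity. The only difference is cosmetic — you work with a general convex combination and the absolute-value notation $|z|$, whereas the paper takes two-point combinations and does the same triangle-inequality bookkeeping componentwise by cases on $I_1, I_2$.
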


\begin{proof}
It is sufficient to prove that the set $\bigcup_{I\subseteq\left[n\right]} P^I$ is convex. For that, consider $y^1,y^2\in \bigcup_{I\subseteq\left[n\right]} P^I$; by definition, let $x^1,x^2 \in P$ be such that there are sets $I_1, I_2$ giving $(x^1)^{I_1}=y^1$ and $(x^2)^{I_2} = y^2$. For any $\lambda\in\left[0,1\right]$, consider $y=\lambda y^1+ (1-\lambda) y^2$; we show $y \in \bigcup_{I \subseteq [n]} P^I$.

    By construction we have:
\begin{align*}
y_i=\left\{
\begin{array}{rl}
\lambda x^1_i+ (1-\lambda) x^2_i &  i\in I^1\cap I^2\\
\lambda x^1_i- (1-\lambda) x^2_i &  i\in I^1\backslash I^2\\
- \lambda x^1_i+ (1-\lambda) x^2_i &  i\in I^2\backslash I^1\\
- \lambda x^1_i- (1-\lambda) x^2_i &  i\in \left[n\right]\backslash I^1\cup I^2\\
\end{array}
\right.
\end{align*}
Now, let $\bar{I}=\left\{i \in \left[n\right]: y_i \ge 0 \right\}$. Then define $x :=  y^{\bar{I}}$, which is nonnegative by construction.
% and $x(\lambda)^{\bar{I}} = y(\lambda)$. 
By non-negativity of the $x^i$'s, we have $|\lambda x^1_i- (1-\lambda) x^2_i | \le \lambda x^1_i+ (1-\lambda) x^2_i $ and  $ |-\lambda x^1_i + (1-\lambda) x^2_i | \le \lambda x^1_i + (1-\lambda) x^2_i $, thus $x \le x^1+ (1-\lambda) x^2\in P $. Since $P$ is down-monotone, we have that $x$ belongs to $P$. Since $y = x^{\bar{I}}$, this gives that $y$ belongs to $\bigcup_{I\subseteq\left[n\right]} P^I$, concluding the proof.
\end{proof}

\begin{lemma}\label{prelemma2}
	For a down-monotone polytope $P\subseteq \R_{+}^n$ we have $(\overline{P})^k=\overline{P^k}$.
\end{lemma}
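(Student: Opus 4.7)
The plan is to prove both inclusions $(\overline{P})^k \supseteq \overline{P^k}$ and $(\overline{P})^k \subseteq \overline{P^k}$ by combining the description of $\overline{(\cdot)}$ from Lemma~\ref{prelemma} with a sign-flipping characterization of which $k$-sparse inequalities are valid for $\overline{P}$.

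First I would establish two preliminary facts. (i) Since $P$ is down-monotone in $\R_+^n$ (so $0 \in P$), any valid inequality $\alpha x \le \beta$ for $P$ can be replaced by one with nonnegative coefficients (by zeroing out the negative entries of $\alpha$) without increasing its support or violating validity. In particular $P^k \subseteq \R_+^n$ and $P^k$ is itself down-monotone, so Lemma~\ref{prelemma} applied to $P^k$ gives $\overline{P^k} = \bigcup_I (P^k)^I$. (ii) A $k$-sparse inequality $\alpha x \le \beta$ is valid for $\overline{P} = \bigcup_I P^I$ if and only if $|\alpha|\,x \le \beta$ is valid for $P$, since for $x \in \R_+^n$ the quantity $\alpha x^I = \sum_{j\in I} \alpha_j x_j - \sum_{j \notin I} \alpha_j x_j$ is maximized over $I$ exactly by $|\alpha|\,x$.

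For the easier inclusion $(\overline{P})^k \supseteq \overline{P^k}$, note that $(\overline{P})^k$ inherits the sign-flip symmetry of $\overline{P}$ (the family of $k$-sparse inequalities valid for $\overline{P}$ is invariant under sign-flips), and the inclusion $P \subseteq \overline{P}$ forces $(\overline{P})^k \supseteq P^k$. Combining symmetry with this inclusion yields $(\overline{P})^k \supseteq (P^k)^I$ for every $I \subseteq [n]$, and then convexity of $(\overline{P})^k$ gives $(\overline{P})^k \supseteq \textup{conv}\bigl(\bigcup_I (P^k)^I\bigr) = \overline{P^k}$.

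The harder direction is $(\overline{P})^k \subseteq \overline{P^k}$. Given $y \in (\overline{P})^k$, the plan is to set $I := \{j : y_j \ge 0\}$ so that $y^I = |y|$, and to show $|y| \in P^k$; this will place $y \in (P^k)^I \subseteq \overline{P^k}$. To check that $|y|$ satisfies an arbitrary $k$-sparse valid inequality $\alpha x \le \beta$ for $P$ (which by (i) we can take with $\alpha \ge 0$), define $\tilde\alpha$ by $\tilde\alpha_j = \alpha_j$ if $y_j \ge 0$ and $\tilde\alpha_j = -\alpha_j$ otherwise. Then $|\tilde\alpha| = \alpha$, so by (ii) the $k$-sparse inequality $\tilde\alpha x \le \beta$ is valid for $\overline{P}$; since $y \in (\overline{P})^k$ this yields $\tilde\alpha y \le \beta$, and the construction was arranged so that $\tilde\alpha y = \alpha |y|$, giving $\alpha |y| \le \beta$ as desired. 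The main obstacle I anticipate is the bookkeeping around sign flips — in particular justifying carefully that one may restrict attention to nonnegative-coefficient inequalities when defining $P^k$, and that the sign-flipped $\tilde\alpha$ really produces an inequality valid for the full symmetrized polytope $\overline{P}$ rather than merely for $P$.
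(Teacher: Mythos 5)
Your proof is correct, and it rests on the same two structural facts that drive the paper's argument: (a) $(\overline{P})^k$ is invariant under coordinate sign-flips, and (b) on the nonnegative orthant $(\overline{P})^k$ coincides with $P^k$. The place where you genuinely diverge from the paper is in how you establish (b). The paper (Claim~\ref{claim:down1}) uses the projection characterization of the $k$-sparse closure --- that $x \in P^k$ iff for every $I \in {[n]\choose k}$ there is $y \in P$ with $y|_I = x|_I$, a fact imported from~\cite{deygood} --- and then exploits the symmetry of $\overline{P}$ to choose the witness $y$ nonnegative. You instead work entirely on the inequality side: you first observe that down-monotonicity lets one restrict to nonnegative-coefficient $k$-sparse cuts for $P$, and then set up the correspondence $\alpha \leftrightarrow \tilde\alpha$ with $|\tilde\alpha|=\alpha$ between such cuts for $P$ and $k$-sparse cuts for $\overline{P}$, using it to pass $\tilde\alpha y \le \beta$ into $\alpha|y|\le\beta$. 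This is arguably more self-contained (it argues directly from the definition of $P^k$ rather than leaning on the projection lemma) at the cost of a little extra bookkeeping with sign flips, which you handle carefully --- the key checks ($\alpha^+ x\le\beta$ stays valid by down-monotonicity, $|\tilde\alpha|=\alpha$ preserves $k$-sparsity, $\tilde\alpha y=\alpha|y|$) all go through. Your easier direction is essentially identical to the paper's, combining the sign-flip invariance of $(\overline{P})^k$ with monotonicity of the $k$-sparse closure and convexity.
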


\begin{proof}
	We break the proof into a couple of claims.

\begin{claim}\label{claim:down1} $(\overline{P})^k \cap \mathbb{R}^n_{+} = P^k= \overline{P^k} \cap \mathbb{R}^n_{+}.$
\end{claim}

\begin{proof}
\let\qed\relax
	For the first equality, notice that since $\overline{P}^k \supseteq P^k$ it suffices to prove $(\overline{P})^k \cap \mathbb{R}^n_{+} \subseteq P^k$. For any $x \in (\overline{P})^k \cap \R^n_+$ and $I \subseteq {\left[n\right]\choose k}$, there exists $y \in \overline{P}$ such that $y|_I = x|_I$. Moreover, using the fact that $x \ge 0$ and the symmetry in the definition of $\overline{P}$, there is one such $y$ which is non-negative, and hence $y \in P$. But again using $x|_I = y|_I$, we get that $x \in P^k$.
	
	For the second equality, since $P$ is down-monotone we have that $P^k$ is down monotone. Therefore, from Lemma \ref{prelemma} $\overline{P^k} = \bigcup_{I \subseteq [n]}(P^k)^I$, which implies $\overline{P^k} \cap \mathbb{R}^n_{+} = P^k$. \hfill $\diamond$
\end{proof}

\begin{claim}\label{claim:down2}
	Consider $z \in (\overline{P})^k$ and let $y = z^I$ for some $I \subseteq [n]$. Then $y \in (\overline{P})^k$.
\end{claim}

\begin{proof}
\let\qed\relax
 First note that it is straight forward to verify that if $\alpha x \leq b$ is a valid inequality for $\overline{P}$, then for every $I \subseteq [n]$ the inequality $a^I x \leq b$ is also a valid inequality for $\overline{P}$. Then the point $y$ must belong to $(\overline{P})^k$, since otherwise $y$ would be separated by some $k$-sparse cut $ax \le b$ and so $z$ would be separated by the $k$-sparse cut $a^I x \le b$. \hfill $\diamond$
\end{proof}

Now we conclude the proof of the lemma. For the direction $(\overline{P})^k \subseteq \overline{P^k}$, let $z \in (\overline{P})^k$ and let $I=\left\{i \in \left[n\right]: z_i \ge 0 \right\}$ and $x = z^I$. Then using Claim \ref{claim:down2} we get $x \in (\overline{P})^k \cap \mathbb{R}^n_{+}$. Thus by Claim \ref{claim:down1} we have $x\in P^k$ and hence $z \in \overline{P^k}$, concluding this part of the proof. For the direction $\overline{P^k} \subseteq (\overline{P})^k$, let $z \in \overline{P^k}$. Let $I=\left\{i \in \left[n\right]: z_i \ge 0 \right\}$ and $x = z^I$. The point $x \in \overline{P^k} \cap \mathbb{R}^n_{+}$. Thus, by Claim \ref{claim:down1} we have that $x \in (\overline{P})^k \cap \mathbb{R}^n_{+}$. However, by Claim \ref{claim:down2} we have that $z \in (\overline{P})^k$. This concludes the proof. 
\end{proof}

The next result together with Lemma~\ref{prelemma2} implies Lemma~\ref{tildepk}.

\begin{prop}
	Consider non-negative vectors $a^1, \dots, a^m \in \mathbb{R}^n_{+}$ and define the polyhedron $P = \{x \in \mathbb{R}^n_{+}\,|\, a^i x \leq b_i \ \forall i \in [m]\}$. Then $\overline{P} = \{x \,|\, (a^i)^I x \leq b_i \ \ \forall I \subseteq [n], \ \forall i \in [m] \}$.
\end{prop}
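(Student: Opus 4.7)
The plan is to prove the two inclusions separately; call the right-hand side $Q := \{x \in \R^n : (a^i)^I x \le b_i,\ \forall I \subseteq [n],\ \forall i \in [m]\}$. Note that both sets are convex ($\overline{P}$ by definition, $Q$ because it is cut out by linear inequalities), so for the inclusion $\overline{P} \subseteq Q$ it suffices to show $P^I \subseteq Q$ for every $I \subseteq [n]$. The key combinatorial observation that drives both directions is that the sign-flip operator $y \mapsto y^I$ is an involution (i.e.\ $(y^I)^I = y$), and that for nonnegative $a^j$ and nonnegative $w$, any signed combination $\sum_i \varepsilon_i a^j_i w_i$ with $\varepsilon_i \in \{-1,+1\}$ is bounded above by $a^j w$.

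For the forward direction $\overline{P} \subseteq Q$, fix $I \subseteq [n]$ and take $y \in P^I$; then $w := y^I$ lies in $P$, so $w \ge 0$ and $a^j w \le b_j$ for each $j$. For arbitrary $J \subseteq [n]$ and $j \in [m]$, I would expand
\[
(a^j)^J\, y \;=\; \sum_{i \in J} a^j_i y_i - \sum_{i \notin J} a^j_i y_i
\]
and then substitute $y_i = w_i$ for $i \in I$ and $y_i = -w_i$ for $i \notin I$. The result is an expression of the form $\sum_i \varepsilon_i a^j_i w_i$, where $\varepsilon_i = +1$ precisely when $i$ is in both or neither of $J, I$. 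Since $a^j, w \ge 0$, every term is at most $a^j_i w_i$, so $(a^j)^J y \le a^j w \le b_j$, establishing $y \in Q$.

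For the reverse direction $Q \subseteq \overline{P}$, take $y \in Q$, set $I := \{i \in [n] : y_i \ge 0\}$, and define $w := y^I$. By construction $w \ge 0$. Using that $(a^j)^I y \le b_j$ (a defining inequality of $Q$) and computing
\[
(a^j)^I y \;=\; \sum_{i \in I} a^j_i y_i - \sum_{i \notin I} a^j_i y_i \;=\; \sum_{i \in I} a^j_i w_i + \sum_{i \notin I} a^j_i w_i \;=\; a^j w,
\]
we deduce $a^j w \le b_j$ for all $j$, hence $w \in P$. Finally, since $y = w^I$ (using $(w^I)^I = w$ applied in reverse), we conclude $y \in P^I \subseteq \bigcup_{I} P^I \subseteq \overline{P}$.

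I do not expect a real obstacle here; the only step requiring care is the bookkeeping with the sign patterns, specifically verifying that the identity $(a^j)^I y = a^j w$ holds when $w = y^I$ and, in the forward direction, that the signed expansion of $(a^j)^J y$ is dominated termwise by $a^j w$ thanks to nonnegativity of $a^j$ and $w$. Nonnegativity of the $a^i$ is used crucially in both places, matching the hypothesis of the proposition, and the nonnegativity of $P$ (i.e.\ $P \subseteq \R^n_+$) is what lets us identify $w \in \R^n_+$ satisfying $a^j w \le b_j$ with a genuine point of $P$ in the second direction.
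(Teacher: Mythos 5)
Your proof is correct and follows the same overall strategy as the paper's: in both directions one flips signs to produce a nonnegative representative $w = y^I$ with $I = \{i : y_i \ge 0\}$ and exploits the nonnegativity of the $a^i$. The reverse inclusion $Q \subseteq \overline{P}$ is essentially identical to the paper's. On the forward inclusion $\overline{P} \subseteq Q$, your write-up is in fact a bit more careful: the paper takes $z \in \overline{P}$, picks the single index set $I = \{i : z_i \ge 0\}$, and checks only the one inequality $(a^i)^I z \le b_i$, leaving implicit the easy observation that $(a^i)^J z \le (a^i)^I z$ for every other $J$ (since $a^i \ge 0$ makes $I$ the sign-maximizing choice). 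You instead prove $P^I \subseteq Q$ for every $I$ by writing $(a^j)^J y = \sum_i \varepsilon_i a^j_i w_i$ and noting the termwise domination by $a^j w$, and then conclude $\overline{P} \subseteq Q$ from convexity of $Q$. This also sidesteps the paper's appeal to down-monotonicity (Lemma~\ref{prelemma}) in the forward direction. Both approaches are fine; yours fills in a small step the paper leaves unstated.
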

\begin{proof}

	$(\overline{P} \subseteq \{x \,|\, (a^i)^I x \leq b_i \ \forall I \subseteq [n], \ \forall i \in [m] \})$ Consider $z \in \overline{P}$ and define $I = \left\{i \in \left[n\right]: z_i \ge 0 \right\}$. Then $z^I \in \overline{P} \cap \mathbb{R}^n_{+}$ and thus $z^I \in P$ (from Lemma~\ref{prelemma}). Now observe that $(a^i)^I z = a_i z^I \leq b_i$ where the last inequality follows from that fact that $z^I \in P$. This concludes this part of the proof.

 $(\{x \,|\, (a^i)^I x \leq b_i \ \forall I \subseteq [n], \ \forall i \in [m] \} \subseteq \overline{P})$ Consider $z \in \{x \,|\, (a^i)^I x \leq b_i\ \forall I \subseteq [n], \ \forall i \in [m] \}$. Let $I = \left\{i \in \left[n\right]: z_i \ge 0 \right\}$. Then observe that $a_i z^I = (a^i)^I z \leq b_i$ for all $i \in [m]$ and $z^I \in \mathbb{R}^n_{+}$. Thus, $z^I \in P$ or equivalently, $z \in \overline{P}$. This concludes the proof.
\end{proof}

\end{document}
\endinput
%%
%% End of file `elsarticle-template-num.tex'. 